\documentclass[11pt]{article}

\usepackage{epsf,amsmath,amsfonts,amssymb,amsthm,graphicx,mathrsfs,latexsym,enumerate,leftidx,float}
\usepackage{epsfig} 
\usepackage[text={7.0in,9.5in},centering]{geometry} 
\usepackage{multirow}
\usepackage{color}

\newtheorem{thm}{Theorem}[section]
\newtheorem{lemma}[thm]{Lemma}

\newtheorem{prop}[thm]{Proposition}
\newtheorem{de}[thm]{Definition}
\newtheorem{cor}[thm]{Corollary}
 
\newtheorem{re}[thm]{Remark}

\newtheorem*{acknowledgement*}{Acknowledgement}

\makeatletter
\newcommand{\vast}{\bBigg@{4}}
\newcommand{\Vast}{\bBigg@{5}}
\makeatother

\begin{document}

\title{Markovian Switching of Mutation Rates in Evolutionary Network Dynamics}
\author{ Andrew Vlasic\\ Department of Mathematics and Statistics \\ Queen's University }
\date{}

\maketitle

\begin{abstract}
The replicator-mutator dynamic was originally derived to model the evolution of language, and since the model was derived in such a general manner, it has been applied to the dynamics of social behavior and decision making in multi-agent networks. For the two type population, a bifurcation point of the mutation rate is derived, displaying different long-run behaviors above and below this point. The long-run behavior would naturally be subjected to noise from the environment, however, to date there does not exist a model that dynamically accounts for the effects of the environment. To account for the environmental impacts on the evolution of the populace, mutation rates above and below this bifurcation point are switched according to a continuous-time Markov chain. The long-run behaviors of this model are derived, showing a counterintuitive result that the majority of initial conditions will favor the dominated type. 
\end{abstract}

{\bf Keywords:} Replicator-mutator dynamic; Markovian switching; Lyapunov function; Stochastic process; Recurrence

\section{Introduction}
To describe the evolution of language in a population Nowak, Komarova, and Niyogi \cite{NKN01} applied an evolutionary game theory process, the replicator dynamic (see \cite{hofbauer1998evolutionary}), to display why languages spoken within a population eventually become extinct. As language is a construct and an actor in a subpopulation with a low frequency may perceive an advantage to speak another language, the authors restricted the payoffs to the unit interval and used the values to represent how well two agent may converse. To adjust for an actor immediately switching their language, a mutator parameter is introduce to dictate how often this switch occurs. 

Since the derivation of the mutator-replicator dynamic, there has been a natural application to the evolution of decision making and multi-agents networks. Applications includes the modeling of wireless multi-agents  \cite{TAEH10}, and the evolution of dominance in social networks \cite{OS07}.  The dynamic has proven to be quite complex, inhibiting analyzing and characterizing the model for a general number of nodes. To simplify the analysis, a fixed global mutation rate is assumed to dictate the evolution of the population, where the mutation rate for an agent to switch to a specific type is weighted using the associated payoffs. While the complexity of the model has made analyzing the long-run behavior quite difficult, a few authors have been able to determine fixed and stable points, and bifurcations about the mutation parameter when there are two or three network nodes \cite{KL10, PCL13}. 

The consideration of evolving populations in randomly varying environments has become quite profound in capturing natural events \cite{GGMP12,VL1}.  However, the replicator-mutator has yet to be considered in this environment. There are many examples displaying the importance of modeling such phenomena, for instance: social media contributed to individuals participating in the Arab Spring \cite{internet, opening} by manipulating the mutation rate between the two types of agents who are either participating or not-participating; the evolution of segregation within a society and the influence of social norms \cite{EF07, EHMM2016}; and the influence social interactions have with individuals updating or changing their opinions or beliefs \cite{opinion2013}. Taking the replicator-mutator under the influence of random environments would capture the evolution of networks with respect to social phenomena quite well.

To capture the influence the environment has on the global interaction of agents, a model is proposed that considers the replicator-mutator with a randomly changing value of the global mutation parameter, where the various environments dictate the values of the global mutation parameter and the switching of the mutation value is independent of agent interactions. Typical stochastic accounting incorporates a Gaussian forcing term which would override the internal fixed points of the deterministic process. As the stochastic forcing does not negate the influence of the the internal fixed points, the dynamic is further complicated, creating the potential for multiple long-run behaviors. To display the complexity that this minor change has on the long-run behavior of the dynamic, the case of a two subpopulation network and a random change of two environments is analyzed. The global long-run behavior is characterized by displaying local properties in natural subintervals and combined to show the dynamic is tight (defined in Section \ref{Long-Run Behavior} and see \cite{EK} for further reference), and the existence of subintervals where the invariant measure will have positive mass. Although the dynamic is tight, the existence of situations where the initial condition will dictate the long-run behavior will be shown. 

Unsurprisingly, given the initial condition where the dominant type in the deterministic setting has a high frequency, there is a potential for the stochastic perturbation to favor the dominated type. However, if the dominant type has a relatively low frequency, which still may be larger than the dominated type, the evolution of this population will favor the dominated type. 

Lastly, the paper ``On Equilibrium Properties of the ReplicatorÐMutator Equation in Deterministic and Random Games" \cite{mr_stoch} was the first to consider noise in the dynamic by assuming the game is random. The authors utilize techniques from classical and random polynomial theory to assist in deriving the expected number of equilibria. This differs significantly from this paper since the game is assumed to be fixed and the evolution of this dynamic is analyzed to established the expected long-run behavior given any initial conidition. 

\subsection{Preliminary on Markov Chains}
Before the model is introduced we shall briefly go over necessary definitions of Markov chains, the Markov property, and martingales. When natural, a definition will be tailored to the model, introduced in the following section, which is an ordinary differential equation composed with a continuous time Markov chain. 

Let $r = \big( r(t) \big)_{t \geq 0}$ be a continuous-time Markov chain with the state space $\{1,2\}$ and generator
$
Q=
\left(
\begin{array}{cc}
 -q_{12} & q_{12}     \\
 q_{21} & - q_{21} 
\end{array}
\right),
$
where $q_{12}>0$ is a transition rate from state 1 to state 2, and $q_{21}>0$ is a transition rate from state 2 to state 1. For instance, taking $\delta>0$ small enough, $P\Big( r(t+\delta)=2 \Big| r(t)=1 \Big)= q_{12} \delta + o(\delta)$ and $P\Big( r(t+\delta)=1 \Big| r(t)=2 \Big)= q_{21} \delta + o(\delta)$.  

To account for the random times when events in the Markov chain process occur, the definition below introduces when such times are well-defined.

\begin{de}
A random time $\tau$ is called a stopping time with respect to a filtration $\big\{ \mathcal{F}_t\big\}_{t\in\mathbb{R}^+}$ if for all $t$, $\{\tau \leq t\} \in \mathcal{F}_t$. Notice that if $\tau$ is a stopping time then $\tau \wedge t:=\min\{\tau,t\}$ is a finite stopping time for all $t$.
\end{de}

Given an increasing sequence of times that the process jump to a different state, which are also stopping times, say $0=\tau_0<\tau_1< \ldots <\tau_k \to \infty$, $r(t)$ may be written as
$$
r(t) =\sum_{ k=0}^{\infty} r( \tau_k) I_{ [\tau_k, \tau_{k+1}) } (t),
$$
where $I_{[s_1,s_2)}(\cdot)$ is the indicator function on the time interval $[s_1,s_2)$. Given that $ r( \tau_k)=i$, the times between jumps are assumed to be exponentially distributed with parameter $q_{ij}$, with $j:=3- i$. Throughout this paper, $j=3- i$.  Hence, for any $T \geq 0$, $P\Big( \tau_{k+1} - \tau_{k} \geq T \Big| r(\tau_{k})=1 \Big) = e^{ -q_{12} T}$, and similarly for the initial condition $r(\tau_{k})=2$. Finally, this class of Markov chain has a unique stationary distribution $\Pi = \big( \pi_1, \pi_2\big)$, where $\displaystyle \pi_1 = \frac{q_{21} }{ q_{12} +q_{21} }$ and $\displaystyle \pi_2 = \frac{q_{12} }{ q_{12} +q_{21} }$.

The model is an ordinary differential equation with randomly changing parameters subject to a continuous time Markov chain, say $X(t)$. The evolution of such a process has the potential to be conditioned off of the current state of the process,  $X(0)=x_0$, and the state of the Markov chain, $r(t)=i$. The initial conditions will be written as the tuple, $(x_0,i)$. Please see \cite{YZ10} for further information pertaining to the switching of deterministic systems. To keep generality, the following definitions only assume a composition of process.

Take $X(t)$ as a continuous-time Markov process  dependent on time and the state of the Markov process $r(t)$, and define $P_{x_0,i}$ as the probability measure corresponding to $X(t)$ when $X(0)=x_0$ and $r(0)=i$. Throughout this paper, the initial conditions are assumed to almost surely hold (with probability one). Furthermore, define $E_{x_0,i}$ as the expectation taken with the initial condition $(x_0,i)$, hence, with respect to the measure $P_{x_0,i}$.

\begin{de}  
For the following definitions, take the pair $(x_0,i)$ as the initial condition, and $I$ as the state space of the process $r(t)$. Furthermore, take the process to evolve strictly in the unit interval.
\begin{itemize}
	\item A Markov process $X(t)$ is said to be regular if for any finite time $T>0$,
$$
P_{x_0,i} \bigg( \inf_{0 \leq t \leq T} X(t) = 0 \ \textnormal{or} \ \sup_{0 \leq t \leq T} X(t) =1 \bigg)=0.
$$
	\item Define $\displaystyle U:= D \times J$, where $D \subsetneq [0,1]$ nonempty and $J \subseteq I$, and $\displaystyle \tau_U : = \inf\Big\{ t \geq 0 : X(t) \in U \Big\}$. A Markov process $X(t)$ is called recurrent with respect to $U$ if it is regular and for any finite time $T>0$,
$$
P_{x_0,i} \Big( \tau_{U} < \infty \Big)=1,
$$
for an arbitrary initial condition $ (x_0,i) \in D^c \times I$. (The notation $D^c$ denotes the compliment of the set $D$.)
	\item If $X(t)$ is not recurrent, it is called transient. 
	\item The process is called positive recurrent with respect to the set $U$ if it is recurrent and $D$ is a strict nonempty subset of the unit interval.
\end{itemize}
\end{de}

This paper utilizes the well applied strong Markov property and Dynkin's formula of probability theory. The definition and formula are established below.

The \textbf{strong Markov property} states that, for a stopping time $\tau$ where $P_{x_0,i}\big( \tau<\infty \big)=1$, the equality $X(t+\tau)-X(\tau)=X(t)$ holds in distribution. Using this definition, \textbf{Dynkin's formula} states that for a function $f(\cdot,i)\in C^2$ for all $i$ and a stopping time $\tau$ where $E_{x_0,i}\big[\tau\big]<\infty$, we have that
$$
E_{x_0,i}\big[f\big(X(\tau)\big)\big]= f\big(x_0,i\big) + E_{x_0,i}\bigg[ \int_{0}^{\tau} Lf\big( X(s) \big)ds\bigg].
$$

\begin{de}
For $0 \leq s \leq t$ and a Markov process $X$, define $\displaystyle E\big[ X(t) \big| \sigma\{ X(s_0) : s_0 \leq s\}  \big]$ as the conditional expectation of the process at time $t$ dependent on the history of the process up to time $s$, (where $\sigma\{ X(s_0) : s_0 \leq s\}$ is the $\sigma$-algebra of the process up to time $s$). A stochastic process $X$ is called a \textbf{martingale} if $\displaystyle E\big[ X(t) \big| \sigma\{ X(s_0) : s_0 \leq s\}  \big]= X(s)$, a \textbf{submartingale} if $\displaystyle E\big[ X(t) \big| \sigma\{ X(s_0) : s_0 \leq s\} \big] \geq X(s)$, and a \textbf{supermartingale} if $\displaystyle E\big[ X(t) \big| \sigma\{ X(s_0) : s_0 \leq s\}  \big] \leq X(s)$.
\end{de}

Lastly, the definitions below give the properties to describe the evolution of the dynamic and understand the long-run behavior of the dynamic. 

\begin{de}
The stationary point $x=0$ is said to be:
\begin{enumerate}
\item stable in probability if for any $\epsilon>0$ and any $i$,
$$
\lim_{x_0 \to 0} P_{x_0,i}\Big( \sup_{t \geq 0} X(t) > \epsilon \Big) =0;
$$
\item stochastically stable if it is stable in probability and
$$
\lim_{x_0 \to 0} P_{x_0,i}\Big( \lim_{t \to \infty} X(t)  = 0 \Big) =1;
$$
\item stochastically unstable if it is not stochastically stable. 
\end{enumerate}
\end{de}

\section{Results}

\subsection{Replicator-Mutator Dynamic}\label{MR}
The evolution of behavior adaptation in social networks with the influence of a changing environment is considered. For the general setting, there are $n$ types (or nodes), where agents of type $i$ has behavior (or strategy) $S_i$. Define $b_{ij}$ as the payoff for an agent with behavior $S_i$ interacting with $S_j$. The model assumes $b_{ii}=1$, and for $j\neq i$, $0\leq b_{ij}<1$. With these payoffs, an agent would prefer to interact with another agent with the same behavior, however, there are other behaviors that the agent has a positive interaction. This general game is known as a coordination game, and the frequency of a type tends to dictate the evolution of the population. If $b_{ij}>0$, we say that type $i$ is \textbf{attracted} to behavior $S_j$.  

Define $\displaystyle B=(b_{ij})_{1\leq i,j \leq n}$ as the payoff matrix, take $x_i\in[0,1]$ to be the frequency of type $i$, and $\mathbf{x}:=(x_1, x_2, \ldots, x_n)$. Note that the payoff matrix $B$ has a graph theoretic interpretation of the adjacency matrix of a directed graph. The natural assumption that interactions of agents are random, well-mixed, independent, and identically distributed is taken for the model. Under this assumption, the fitness of an agent in type $i$ is $(B\mathbf{x})_i$. Classic replicator dynamics states the the evolution of type $i$ is given by the dynamic $\displaystyle \dot{x}_i =  x_i \bigg( \big(B \mathbf{x}\big)_i - \mathbf{x}\cdot B \mathbf{x} \bigg)$, which assumes growth is proportional to difference of the fitness of type $i$ and the average fitness of the population. 
 
Since social networks are considered, there exists the distinct potential of an agent switching their behavior. For instance, if an agent of type $i$ has a smaller fitness than that of an agent of type $j$, and $S_j$ is a behavior that an agent in type $i$ is attracted to, it would be advantageous for this agent to switch behaviors. For a constant $0\leq \mu \leq 1$, the rate of type $i$ adopting the behavior of type $j$ is given by 
$$
\displaystyle p_{ij}:=
\left\{
\begin{array}{cc}
\frac{\mu b_{ij} }{ \sum_{k\neq i } b_{ik}  }  &   i\neq j   \\
1-\mu  & \mbox{otherwise}      \\
   
\end{array}
\right. .
$$ 
From these mutation rates, define the matrix of mutations as $\displaystyle P=(p_{ij} )_{1\leq i,j \leq n}$, which is a stochastic matrix.  From this structure, the replicator-mutator dynamic for type $i$ is defined as
$$
\dot{x}_i = \sum_{j=1}^n p_{ji} x_j \big(B \mathbf{x}\big)_j - x_i  \sum_{j=1}^n  x_j \big(B \mathbf{x}\big)_j.
$$
Notice, when $\mu=0$ the replicator-mutator is the classic replicator dynamic, and when $\mu=1$ agents will continuously adopt another behavior. Because each entry in $B$ is positive, the dynamic is invariant to the $(n-1)$-dimensional simplex. 

When there are two strategies, with $b_1:=b_{12}$ and $b_2:=b_{21}$, the dynamic simplifies to 
$$
\dot{x}_1= x_1\Big[ b_1 + x_1(1-b_1)\Big] \big( 1-\mu -x_1\big)   +   \big( 1-x_1 \big)\Big[ 1 + x_1(b_2 -1)\Big] \big( \mu - x_1 \big)
$$
(see \cite{PL11}). This particular dynamic has a bifurcation about the parameter $\mu$ (see \cite{KL10, PCL13}), and to account for the bifurcation denote $\mu_c$ as the critical point.  

Consider the case when $b_1 \neq b_2$. When $\mu \leq \mu_c$, there are three fixed point, say $a_1 < a_2< a_3$, where $a_1$ and $a_3$ are stable, and $a_2$ is unstable. When $\mu>\mu_c$, there is one real fixed point $\hat{a}$ that is globally stable. For reference of the size of the globally stable point $\hat{a}$, when $b_2 > b_1$ we see  $a_1<\hat{a} < a_2$, and when $b_2 < b_1$ we see $a_2<\hat{a} < a_3$. While this difference in the size of $\hat{a}$ will have an effect the evolution of the dynamic, the proofs of the properties will apply the same techniques and logic, and hence will be stated as corollaries.

When $b_1=b_2$ the symmetry creates a pitchfork bifurcation. This pitchfork bifurcation implies $\hat{a} = a_2$, and the bifurcation point has the simple form $\displaystyle \mu_c= \frac{1-b_1}{4}$.

\subsection{Markovian Switching of the Mutation Parameter}\label{MSmodel}
To account for stochastic forces affecting the population, fixed values of the mutation parameter will be randomly switched during the evolution of the dynamic. The stochastic force is assumed to be generated by a continuous-time Markov chain that is independent of the population dynamic. To develop intuition only two values of $\mu$ will be considered. We call these two values $\mu_1$ and $\mu_2$ where $\mu_1<\mu_c<\mu_2$. 

Define $x(t)$ as the replicator-mutator dynamic with with the mutator randomly switched, and take $r(t)$ as the continuous-time Markov chain that dictates the switching of the parameter. From the description of the dynamic, $r(t)$ stays in state space $\{1,2\}$ where the generator of $r(t)$, denoted as $Q$, has the form
$
Q=
\left(
\begin{array}{cc}
 -q_{12} & q_{12}     \\
 q_{21} & - q_{21} 
\end{array}
\right)
$. 

The dynamic $x(t)$ evolves according to
\begin{equation}\label{smr_first}
dx\big( t \big) = x{(t)} \Big[ b_1 + x{(t)} (1-b_1)\Big] \big( 1-\mu_{r(t)} -x{(t)} \big)   +   \big( 1-x{(t)}  \big)\Big[ 1 + x{(t)}  (b_2 -1)\Big] \big( \mu_{r(t)} - x{(t)}  \big)dt,
\end{equation}
where $r(t)$ evolves according to
\begin{equation}\label{mcr}
P\Big(  r\big( t + \Delta t \big) = j \big| r\big( t \big) =i  \Big) = q_{ij}(\Delta t) + o(\Delta t), \ \ i \neq j,
\end{equation}
and we collect both processes as
\begin{equation}\label{smr}
 Y(t) := \big( x(t), r(t) \big).
\end{equation}

One may see that $Y(t)$ is a two-component Markov process, and Section \ref{General Behavior} establishes the process to be well-defined. The generator for this random process, denoted as $L$, has the form
\begin{equation}\begin{split}
LV(x,i) & = V'(x,i) \cdot \Big\{ x \Big[ b_1 + x (1-b_1)\Big] \big( 1-\mu_i -x \big)   +   \big( 1- x \big)\Big[ 1 + x (b_2 -1)\Big] \big( \mu_i- x \big) \Big\} \\
&+ \Big[ q_{ij}V(x,j) - q_{ij}V(x,i) \Big] \\
& = V'(x,i)\cdot \dot{Y}(x,i) + \Big[ q_{ij}V(x,j) - q_{ij}V(x,i) \Big],
\end{split}\end{equation}
where the function $V(x,i)$ is twice continuously differentiable for $x$ and $i=1,2$, (see \cite{YZ10} for further information). 

The first term has the classical Lyapunov form, while the second part represents the contribution of the perturbations from the switching of the mutator parameter. Adapting to the notation, define the dynamic $\displaystyle \dot{Y}(x,i)= x \Big[ b_1 + x (1-b_1)\Big] \big( 1-\mu_{i} -x \big)   +   \big( 1-x \big)\Big[ 1 + x (b_2 -1)\Big] \big( \mu_{ i} - x \big)$.

\section{Long-Run Behavior}\label{Long-Run Behavior}

To establish the results, the stochastic Lyapunov method will be utilized to show the expected hitting time for the process to hit particular closed subintervals within the unit interval are finite. The direction for which the process hits the subinterval will play a role in the evolution of the dynamic, and to adjust for this subtly, the hitting times will be denoted for coming from the left or from the right. Define the hitting time $\tau^-_{x_0}$ for the process coming from the left and the left endpoint of the subinterval is $x_0$, and $\tau^+_{x_0}$ as the hitting time for the process coming from the right and the right endpoint of the subinterval is $x_0$. 

After Lemma \ref{0trans} establishes the proof technique, for brevity any sequential results, after an inequality has been derived, it will be stated that the hitting time is finite. In particular, Lemma \ref{0trans} will display the inequality of  $LV(x,i) < -\kappa$ for a positive constant $\kappa$, $i=1,2$, and any $x$ in the subinterval of interest is established, where it will naturally follow that the expectation of the hitting time is finite; (see \cite{YZ10, MY06, MT93} for further information).  

Recall in Section \ref{MR}, when $\mu<\mu_c$ the values $a_1,a_2$, and $a_3$ are the only fixed points, and when $\mu>\mu_c$ the value $\hat{a}$ is the only real fixed point. Furthermore, for the case when $\mu>\mu_c$, if $b_2 > b_1$ then $\hat{a} < a_2$, and if $b_2 < b_1$ then $\hat{a} > a_2$.  This observation may be readily seen from the equality $Y\big( x_2(t), r(t) \big) = 1 - Y\big( x_1(t), r(t) \big)$. Finally, when $b_1=b_2$ there is pitchfork bifurcation which yields $\hat{a} = a_2$.


Throughout this section, without loss of generality, the state $i=1$ corresponds to $\mu<\mu_c$, and the state $i=2$ corresponds to $\mu>\mu_c$. 

\subsection{General Behavior}\label{General Behavior}
While one may observe that the evolution of the dynamic differs completely between the two cases of $b_1\neq b_2$ and $b_1 = b_2$, as will be seen in this subsection, this distinction dictates the global evolution of the dynamic and not the local evolution near the points $a_1$ and $a_3$. Hence, the results pertain to the general dynamic defined in Section \ref{MSmodel}.

The initial conditions of $i=1,2$  and $x_0\in(0,a_1)$ then $x_0\in(a_3,1)$ are first considered to display the dynamic to evolve away from the end points of the unit interval. Whether the deterministic dynamic is in state $i=1$ or $1=2$, the process will flow away from each end point thus showing that lemmas \ref{0trans} and \ref{1trans} are intuitive.

\begin{lemma}\label{0trans}
For the dynamic defined by Equation \eqref{smr} with the initial conditions $x_0\in(0,a_1)$ and $i \in \{1,2\}$, $E_{x_0,i}\big[ \tau^-_{a_1} \big]<\infty$.
\end{lemma}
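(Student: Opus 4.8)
The plan is to exhibit a state-dependent Lyapunov function $V(\cdot,i)\in C^2$, $i=1,2$, that is nonnegative on $[0,a_1]$ and satisfies $LV(x,i)\le-\kappa$ for some constant $\kappa>0$ and all $x\in(0,a_1)$. Dynkin's formula applied to the bounded stopping time $\tau^-_{a_1}\wedge t$ then yields $E_{x_0,i}\big[\tau^-_{a_1}\wedge t\big]\le V(x_0,i)/\kappa$ for every $t$, and monotone convergence as $t\to\infty$ gives $E_{x_0,i}\big[\tau^-_{a_1}\big]\le V(x_0,i)/\kappa<\infty$ (which in particular forces $\tau^-_{a_1}<\infty$ almost surely).

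First I would record the sign of the drift $\dot Y(\cdot,i)$ on $[0,a_1]$. Since $\dot Y(0,i)=\mu_i>0$ and, as recalled in Section \ref{MR}, the only zeros of $\dot Y(\cdot,1)$ in $[0,1]$ are $a_1<a_2<a_3$ while the only zero of $\dot Y(\cdot,2)$ is $\hat a$ with $\hat a>a_1$, continuity forces $\dot Y(x,1)\ge0$ on $[0,a_1]$ (equality only at $x=a_1$) and $\dot Y(x,2)\ge c_2>0$ on $[0,a_1]$, where $c_2:=\min_{x\in[0,a_1]}\dot Y(x,2)$ is positive by compactness. In particular $x(t)$ is nondecreasing while it lies in $(0,a_1)$, so from $x_0\in(0,a_1)$ the process remains in $[x_0,a_1)\subset(0,1)$ until $\tau^-_{a_1}$ — this takes care of regularity near $0$ and makes the ``from the left'' qualifier in $\tau^-_{a_1}$ automatic.

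The key point, and the main obstacle, is that $\dot Y(x,1)\to0$ as $x\to a_1^-$: under the state-$1$ flow alone $a_1$ is an attracting fixed point and is not reached in finite time, so no $i$-independent Lyapunov function can achieve $LV(x,i)\le-\kappa$ uniformly. The resolution is to let the jump term carry the negativity in state $1$ by making $V$ strictly larger there. Fix $c\in(0,c_2/q_{21})$ and set $V(x,1):=a_1-x+c$ and $V(x,2):=a_1-x$; both lie in $C^2$ and are nonnegative on $[0,a_1]$. Using the generator with $V'(\cdot,i)\equiv-1$,
\[
LV(x,1)=-\dot Y(x,1)+q_{12}\bigl(V(x,2)-V(x,1)\bigr)=-\dot Y(x,1)-q_{12}c\le-q_{12}c,
\]
\[
LV(x,2)=-\dot Y(x,2)+q_{21}\bigl(V(x,1)-V(x,2)\bigr)=-\dot Y(x,2)+q_{21}c\le-\bigl(c_2-q_{21}c\bigr),
\]
so $LV(x,i)\le-\kappa$ on $(0,a_1)$ with $\kappa:=\min\{q_{12}c,\ c_2-q_{21}c\}>0$.

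Finally I would run the stopping-time argument: for fixed $t$, $\tau^-_{a_1}\wedge t$ is a bounded stopping time and $Y(s)\in(0,a_1)\times\{1,2\}$ for $s<\tau^-_{a_1}\wedge t$, so Dynkin's formula and $V\ge0$ give
\[
0\le E_{x_0,i}\bigl[V\bigl(Y(\tau^-_{a_1}\wedge t)\bigr)\bigr]=V(x_0,i)+E_{x_0,i}\!\left[\int_0^{\tau^-_{a_1}\wedge t}LV\bigl(Y(s)\bigr)\,ds\right]\le V(x_0,i)-\kappa\,E_{x_0,i}\bigl[\tau^-_{a_1}\wedge t\bigr],
\]
hence $E_{x_0,i}\bigl[\tau^-_{a_1}\wedge t\bigr]\le V(x_0,i)/\kappa$ for all $t$, and letting $t\to\infty$ finishes the proof. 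The only genuinely delicate input is the structural fact $c_2>0$, namely that the state-$2$ drift stays bounded away from zero on all of $[0,a_1]$; the remainder is bookkeeping, and this same scheme (a linear $V$ with a state-dependent offset tuned to the jump rates) is what the later lemmas will invoke.
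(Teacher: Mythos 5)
Your proof is correct, but it takes a different route from the paper's. The paper uses the logarithmic Lyapunov function $V(x,i)=-c_i\log(x)$ with $c_1>c_2>0$ and $c_1-c_2\ll 1$, whereas you use a linear function $V(x,i)=a_1-x+c\,\mathbf{1}_{\{i=1\}}$. Both arguments hinge on the same mechanism, which you identify explicitly and the paper leaves implicit: since $\dot Y(\cdot,1)$ vanishes at $a_1$, the drift term alone cannot give a uniform bound $LV(x,1)\le-\kappa$, so one must take $V(\cdot,1)>V(\cdot,2)$ and let the switching term $q_{12}\bigl(V(x,2)-V(x,1)\bigr)<0$ supply the negativity in state~$1$, while keeping the gap small enough (your $c<c_2/q_{21}$, the paper's $c_1-c_2\ll1$) that the resulting positive contribution in state~$2$ is dominated by the drift $-\dot Y(x,2)$, which is bounded away from zero on $[0,a_1]$ because $\hat a>a_1$ in every case. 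Your version buys fully explicit constants and a transparent verification; the paper's logarithmic choice buys something else, namely that $V(x,i)\to\infty$ as $x\to0^+$, which simultaneously handles non-attainability of the boundary $x=0$ (regularity) in the same stroke. You compensate for this correctly by observing that the drift is nonnegative in both states on $(0,a_1)$, so the path is nondecreasing and stays in $[x_0,a_1)$ until $\tau^-_{a_1}$; with that observation in place, your argument is complete and, if anything, more rigorous than the paper's at the step where the paper merely asserts the existence of $\kappa$.
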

\begin{proof}
Define $V(x,i)=-c_i \log(x)$, and note that $\dot{Y}(x,1) > 0$ for $x\in(0,a_1)$, $\dot{Y}(a_1,1) =0$, and $\dot{Y}(x,2) > 0$ for $x\in(0,a_1]$. Then 
\begin{equation*}\begin{split}
LV(x,i) = \frac{ -c_i \dot{Y}(x,i)}{x} +q_{ij} \big( c_i -c_j \big) \log(x).
\end{split}\end{equation*}
Taking $c_1>c_2>0$, and $c_1 - c_2 <<1$, there exists a $\kappa>0$ such that $\displaystyle LV(x,i)<-\kappa$, for all $x\in(0,a_1)$. Thus, by Dynkin's formula, we have that
\begin{equation*}\begin{split}
0 \leq E_{x_0,i}\Big[ V\big(X( \tau^-_{a_1} \wedge t)  \big)\Big] & = V(x_0,i) + E_{x_0,i}\bigg[  \int_{0}^{ \tau^-_{a_1} \wedge t } LV\big(X(s)\big)ds\bigg] \\
& \leq V(x_0,i) -\kappa E_{x_0,i}\big[ \tau^-_{a_1} \wedge t\big]. 
\end{split}\end{equation*}
Hence $\displaystyle E_{x_0,i}\big[ \tau^-_{a_1} \wedge t\big] \leq \frac{V(x_0,i)}{\kappa }$, and the monotone convergence yields our result.
\end{proof}

\begin{lemma}\label{1trans}
For the initial conditions $x_0\in(a_3,1)$ and $i \in \{1,2\}$, $E_{x_0,i}\big[ \tau^+_{a_3} \big]<\infty$.
\end{lemma}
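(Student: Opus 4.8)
The plan is to mirror the proof of Lemma \ref{0trans}, reflecting the construction about the midpoint of the unit interval: where that lemma used a logarithmic barrier at $x=0$, here I would use one at $x=1$. Concretely, I would set $V(x,i)=-c_i\log(1-x)$ with constants $c_1>c_2>0$ to be fixed later, so that $V(\cdot,i)\in C^2$ on $[a_3,1)$, $V(x,i)>0$ on $(a_3,1)$, and $V(x,i)\to\infty$ as $x\to 1^-$. One could also simply invoke the symmetry $x\mapsto 1-x$, which interchanges $b_1\leftrightarrow b_2$ and $a_1\leftrightarrow 1-a_3$ (recall $Y\big(x_2,r\big)=1-Y\big(x_1,r\big)$) and turns the present statement into Lemma \ref{0trans}; but I would prefer to spell the argument out directly.

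The first step is to record the sign of the drift on the region of interest. Since $a_3$ is the largest fixed point of the $\mu_1$-dynamic and is stable, $\dot Y(x,1)<0$ for $x\in(a_3,1)$ while $\dot Y(a_3,1)=0$; and since the unique fixed point $\hat a$ of the $\mu_2$-dynamic satisfies $\hat a<a_3$ in every parameter regime (the reflected form of the ordering recorded in Section \ref{MR}), we also have $\dot Y(x,2)<0$ for $x\in[a_3,1)$. The generator then works out to
$$
LV(x,i)=\frac{c_i\,\dot Y(x,i)}{1-x}+q_{ij}\big(c_i-c_j\big)\log(1-x).
$$
The second step is to establish $LV(x,i)<-\kappa$ for all $x\in(a_3,1)$ and $i\in\{1,2\}$, for some $\kappa>0$, after choosing $c_1-c_2$ small and positive. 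Near $x=1$ the first term is strictly negative and behaves like $-c_i\mu_i/(1-x)\to-\infty$, which dominates the switching term $q_{ij}(c_i-c_j)\log(1-x)$ since the latter only blows up logarithmically; hence $LV\to-\infty$ at $x=1$ in both states. On a compact piece $[a_3,1-\epsilon]$, the drift term degenerates only at $x=a_3$ in state $1$, where $LV(a_3,1)=q_{12}(c_1-c_2)\log(1-a_3)<0$; in state $2$ the drift term stays bounded away from $0$ there, so the potentially positive switching term $q_{21}(c_2-c_1)\log(1-x)$ is rendered negligible by taking $c_1-c_2$ small. Taking the infimum over the two regions gives the uniform bound.

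The last step is standard: with $\tau:=\tau^+_{a_3}$, Dynkin's formula applied to $V\big(X(\tau\wedge t)\big)$ yields, exactly as in Lemma \ref{0trans},
$$
0\leq E_{x_0,i}\big[V(X(\tau\wedge t))\big]=V(x_0,i)+E_{x_0,i}\Big[\int_0^{\tau\wedge t}LV(X(s))\,ds\Big]\leq V(x_0,i)-\kappa\,E_{x_0,i}\big[\tau\wedge t\big],
$$
so $E_{x_0,i}\big[\tau\wedge t\big]\leq V(x_0,i)/\kappa$ for every $t$, and monotone convergence gives $E_{x_0,i}\big[\tau^+_{a_3}\big]<\infty$. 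The main obstacle is the uniform estimate $LV<-\kappa$: one must check that the single degeneracy of the drift (at $x=a_3$ in state $1$) is absorbed by the switching term, and that the wrong-sign switching term in state $2$ is dominated — near $x=1$ by the $1/(1-x)$ growth of the drift term, and away from $x=1$ by choosing $c_1-c_2$ small; everything else is a routine reflection of the computation already done for Lemma \ref{0trans}.
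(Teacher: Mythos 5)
Your proposal is correct and follows essentially the same route as the paper: the same Lyapunov function $V(x,i)=-c_i\log(1-x)$ with $c_1>c_2>0$ and $c_1-c_2$ small, the same sign analysis of the drift, and the same Dynkin/monotone-convergence conclusion. The only difference is that you spell out in more detail why $LV(x,i)<-\kappa$ holds uniformly (handling the degeneracy at $x=a_3$ in state $1$ and the wrong-sign switching term in state $2$), which the paper leaves implicit.
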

\begin{proof}
Define $V(x,i)=-c_i \log(1-x)$, and note that $\dot{Y}(x,1) < 0$ for $x\in(a_3,1)$, $\dot{Y}(a_3,1) =0$, and $\dot{Y}(x,2) < 0$ for $x\in[a_3,1)$. Then 
\begin{equation*}\begin{split}
LV(x,i) = \frac{ c_i \dot{Y}(x,i)}{1-x} +q_{ij} \big( c_i -c_j \big) \log(1-x).
\end{split}\end{equation*}
Taking $c_1>c_2>0$, and $c_1 - c_2 <<1$, there exists a $\kappa>0$ such that $\displaystyle LV(x,i)<-\kappa$, for all $x\in(a_3,1)$. Therefore, following the logic of Lemma \ref{0trans}, our result follows.
\end{proof}

The previous lemmas established that the evolution of the dynamic will tend to the interval $(a_1,a_3)$, which yields the question: given an initial condition $x_0\in(a_1,a_3)$, will the dynamic stay in this interval? For simplicity, consider the situation where $a_1< x_0 <\hat{a} < a_2$, or the case when $b_2 < b_1$. In State 1, the dynamic will evolve to $a_1$, which has a slow convergence time to this fixed point. This slowly converging evolution will give time to randomly switch to State 2, changing the evolution of the dynamic to the point $\hat{a}$. A similar situation occurs for the dynamic close to $\hat{a}$. These heuristics display a natural characteristic of the dynamic to be invariant to the interval $(a_1,a_3)$. To display this property, the following two lemmas show for an initial condition in the interval, the points $a_3$ and $a_1$ are unstable. 


\begin{lemma}\label{a3trans}
For the dynamic defined by Equation \eqref{smr} with the initial conditions $x_0<a_3$ and $i\in\{1,2\}$, the point $x= a_3$ is stochastically unstable.
\end{lemma}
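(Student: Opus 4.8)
The plan is to adapt the Lyapunov/Dynkin scheme of Lemma~\ref{0trans}, but arranged so that the process is pushed \emph{away} from $a_3$ on a small left-neighbourhood $(a_3-\delta,a_3)$, and then to deduce instability directly from the definition. Since stochastic stability requires stability in probability, it suffices to exhibit a $\delta>0$ such that, from every $x_0\in(a_3-\delta,a_3)$ and every $i$, the process almost surely leaves $(a_3-\delta,a_3)$ through its left endpoint $a_3-\delta$. Indeed, this forces $\sup_{t\ge 0}\big(a_3-X(t)\big)\ge\delta$ almost surely, so for $\epsilon<\delta$ the quantity $P_{x_0,i}\big(\sup_{t\ge 0}(a_3-X(t))>\epsilon\big)$ equals $1$ for all such $x_0$ and hence does not tend to $0$ as $x_0\to a_3$; stability in probability fails, so $x=a_3$ is stochastically unstable.

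The drift near $a_3$ from the left has a favourable sign pattern: in State~$1$, $\dot Y(x,1)>0$ on $(a_2,a_3)$ with $\dot Y(a_3,1)=0$; in State~$2$, the unique interior fixed point $\hat a$ satisfies $\hat a\le a_2<a_3$ in all three cases ($b_2>b_1$, $b_2<b_1$, $b_1=b_2$), so the State-$2$ flow is strictly decreasing on $(a_2,1)$ and in particular $\dot Y(a_3,2)<0$ is bounded away from $0$. I would take
\[
V(x,i)=-c_i\log(a_3-x),\qquad x\in(a_3-\delta,a_3),
\]
with $c_1>c_2>0$ and $c_1-c_2\ll 1$, so that
\[
LV(x,i)=\frac{c_i\,\dot Y(x,i)}{a_3-x}+q_{ij}\,(c_i-c_j)\log(a_3-x).
\]
For $i=1$ the first term remains bounded on the neighbourhood (it tends to $c_1\,|\dot Y'(a_3,1)|$ as $x\to a_3^-$), while $q_{12}(c_1-c_2)\log(a_3-x)\to-\infty$; for $i=2$ the first term diverges to $-\infty$ at rate $(a_3-x)^{-1}$ because $\dot Y(a_3,2)<0$, and this dominates the $O\big(|\log(a_3-x)|\big)$ switching term once $c_1-c_2$ is small. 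Shrinking $\delta$ if necessary, $LV(x,i)<-\kappa$ on $(a_3-\delta,a_3)$ for some $\kappa>0$ and $i=1,2$. Because $V\to+\infty$ at $a_3$ and neither state's drift can carry the path across $a_3$ from below, the process stays in $(a_3-\delta,a_3)$ until it reaches $a_3-\delta$; the $\tau\wedge t$ truncation plus monotone convergence exactly as in Lemma~\ref{0trans} then gives $E_{x_0,i}\big[\tau^+_{a_3-\delta}\big]\le V(x_0,i)/\kappa<\infty$, hence $P_{x_0,i}\big(\tau^+_{a_3-\delta}<\infty\big)=1$, which closes the argument begun in the first paragraph.

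I expect the main obstacle to be the uniformity of $LV(x,i)<-\kappa$ over both states on one fixed interval: the two states push in opposite directions, so $c_1,c_2$ and $\delta$ must be chosen jointly — $c_1>c_2$ is forced by the State-$1$ estimate, and $c_1-c_2\ll 1$ is precisely what lets the genuinely negative $(a_3-x)^{-1}$ term of the State-$2$ estimate overcome the (now positive) logarithmic switching contribution on a neighbourhood of non-vanishing size. A point to record carefully is that $\dot Y(a_3,2)\ne 0$ strictly, i.e.\ $a_3\ne\hat a$; this is what makes the State-$2$ drift of order $(a_3-x)^{-1}$ near $a_3$ rather than $O(1)$, and it holds in every regime since $\hat a\le a_2<a_3$. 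Consequently the case split $b_1\lessgtr b_2$ (and $b_1=b_2$) enters only through the location of $\hat a$, the estimate is uniform, and the lemma can indeed be stated once for the general dynamic of Section~\ref{MSmodel}.
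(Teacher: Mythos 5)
Your proof is correct, and it reaches the conclusion by a somewhat different mechanism than the paper. The paper takes the power-law Lyapunov function $V(x,i)=(1-\alpha c_i)(a_3-x)^{-\alpha}$, which blows up at $a_3$, verifies $LV<0$ on a left-neighbourhood (which there forces $c_2>c_1$), and then invokes the supermartingale argument of Proposition 8.7 in \cite{YZ10} to conclude that the process cannot converge to $a_3$. You instead use the logarithmic function $V(x,i)=-c_i\log(a_3-x)$ with $c_1>c_2$, obtain the uniform bound $LV(x,i)<-\kappa$ on $(a_3-\delta,a_3)$, and run the Dynkin/monotone-convergence computation of Lemma \ref{0trans} to get $E_{x_0,i}\big[\tau^+_{a_3-\delta}\big]<\infty$; this directly negates stability in probability (the escape probability is identically $1$ rather than tending to $0$ as $x_0\to a_3$), which is a quantitatively stronger conclusion and keeps the lemma self-contained within the techniques already set up in Lemmas \ref{0trans} and \ref{1trans}. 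Your sign analysis is sound: in state 1 the drift term is bounded and positive (tending to $c_1|\dot{Y}'(a_3,1)|$) and is killed by the switching term $q_{12}(c_1-c_2)\log(a_3-x)\to-\infty$ once $\delta$ is small enough relative to $c_1-c_2$; in state 2 the drift term is of order $-(a_3-x)^{-1}$ and dominates the $O\big(|\log(a_3-x)|\big)$ switching contribution; and the path cannot cross $a_3$ from below since $a_3$ is a fixed point of the state-1 flow and the state-2 drift at $a_3$ is strictly negative.

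One factual slip should be corrected: you assert $\hat a\le a_2<a_3$ ``in all three cases,'' and that the state-2 flow is strictly decreasing on $(a_2,1)$. By Section \ref{MR}, when $b_2<b_1$ one has $a_2<\hat a<a_3$, so both claims fail in that regime. What your argument actually needs is only the strict inequality $\hat a<a_3$, which does hold in every case; it guarantees $\dot{Y}(a_3,2)<0$ and hence that the state-2 drift is negative and bounded away from zero on a sufficiently small left-neighbourhood of $a_3$. The proof survives with that correction, provided you shrink $\delta$ so that $(a_3-\delta,a_3)\subset\big(\max\{a_2,\hat a\},a_3\big)$.
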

\begin{proof}
The argument below will utilize the supermartingale property coupled with the soon to be defined stochastic Lyapunov function exploding at $x=a_3$. A further exposition of this technique is found in Proposition 8.7 on page 226 in Yin and Zhu \cite{YZ10}. 

Define the Lyapunov function $V(x,i)=(1-\alpha c_i)\big(a_3-x\big)^{-\alpha}$. Then
$$
LV(x,i)=\bigg\{  \frac{\dot{Y}(x,i) }{ a_3-x } + q_{ij} \frac{ c_i -c_j }{ 1 -\alpha c_i}\bigg\}\Big(\alpha V(x,i) \Big).
$$
One may see that $\displaystyle \frac{\dot{Y}(x,1) }{  a_3-x }$ is bounded and positive, and $\displaystyle \frac{\dot{Y}(x,2) }{  a_3-x }$ is unbounded and negative. Thus, there exists a neighborhood of $a_3$ (contingent on the values of $c_1$, $c_2$, and $\alpha$), which we call $(a_3-\epsilon,a_3)$, such that $LV(x,i)<0$. Therefore, following the proof in Proposition \ref{prop-pos-rec}, $x=a_3$ is unstable. 
\end{proof}

\begin{lemma}\label{a1trans}
For the dynamic defined by Equation \eqref{smr} with the initial condition $x_0>a_1$ and $i\in\{1,2\}$, the point $x= a_1$ is stochastically unstable.
\end{lemma}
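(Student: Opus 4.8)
The plan is to mirror the proof of Lemma \ref{a3trans}, since $x = a_1$ is, for trajectories started to its right, the exact counterpart of $x = a_3$ for trajectories started to its left. I would use the Lyapunov function $V(x,i) = (1 - \alpha c_i)(x - a_1)^{-\alpha}$ on a one-sided neighborhood $(a_1, a_1 + \epsilon)$, with $0 < c_1 < c_2$ and $0 < \alpha < 1/c_2$, so that $1 - \alpha c_i > 0$ and hence $V > 0$ on that neighborhood. A direct computation of the generator $L$ from Section \ref{MSmodel} then gives
\[
LV(x,i) = \left\{ \frac{\dot{Y}(x,i)}{a_1 - x} + q_{ij}\frac{c_i - c_j}{1 - \alpha c_i} \right\}\bigl( \alpha V(x,i) \bigr),
\]
which is the term-for-term analogue of the identity in Lemma \ref{a3trans} with $a_3 - x$ replaced by $a_1 - x$.

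I would then check the sign of the bracketed factor near $a_1$. In State 1 the point $a_1$ is a stable rest point whose neighboring rest point $a_2$ lies to its right, so $\dot{Y}(x,1) < 0$ on $(a_1, a_2)$, $\dot{Y}(a_1,1) = 0$, and a Taylor expansion gives $\dot{Y}(x,1)/(a_1 - x) \to -\dot{Y}'(a_1,1) > 0$ as $x \to a_1^{+}$; thus $\dot{Y}(x,1)/(a_1 - x)$ is bounded and positive on $(a_1, a_1 + \epsilon)$. In State 2 the unique real rest point is $\hat{a}$, and $a_1 < \hat{a}$ in all of the cases $b_2 > b_1$, $b_2 < b_1$, and $b_1 = b_2$ (where $\hat{a} = a_2 > a_1$), so $\dot{Y}(x,2) > 0$ for $x$ just above $a_1$ with $\dot{Y}(a_1,2) > 0$; hence $\dot{Y}(x,2)/(a_1 - x) \to -\infty$ as $x \to a_1^{+}$, an unbounded negative term. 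Exactly as in Lemma \ref{a3trans}, taking $c_2 - c_1$ large enough relative to the finite bound on $\dot{Y}(x,1)/(a_1 - x)$ makes the bracket negative in State 1, while in State 2 the singularity swamps the fixed switching term once $\epsilon$ is small enough; so $LV(x,i) < 0$ on $(a_1, a_1 + \epsilon)$ for $i = 1, 2$.

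Finally, since $V(\cdot, i) \to \infty$ as $x \to a_1^{+}$ while $LV < 0$ on the punctured neighborhood, the stochastic-Lyapunov argument already used in Lemma \ref{a3trans} (and Proposition \ref{prop-pos-rec}, cf. Proposition 8.7 of \cite{YZ10}) applies verbatim: with $\sigma$ the exit time from $(a_1, a_1 + \epsilon)$, the process $V\bigl(Y(t \wedge \sigma)\bigr)$ is a nonnegative supermartingale, and Dynkin's formula together with the explosion of $V$ at $a_1$ forces $P_{x_0,i}\bigl( \lim_{t \to \infty} x(t) = a_1 \bigr) = 0$ for every $x_0 > a_1$; that is, $a_1$ is stochastically unstable. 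I expect the only fussy point to be the simultaneous choice of the constants — keeping $1 - \alpha c_i > 0$, making $c_2 - c_1$ large enough to dominate the bounded State-1 drift, and shrinking $\epsilon$ last so the State-2 singularity wins — but this is precisely the balancing carried out in Lemma \ref{a3trans}, so no new estimate is needed. (Alternatively, the involution $x \mapsto 1 - x$, under which $Y(x_2(t), r(t)) = 1 - Y(x_1(t), r(t))$, $b_1 \leftrightarrow b_2$ and $a_1 \mapsto a_3$, reduces the statement to Lemma \ref{a3trans} for the transposed payoffs, which was established for arbitrary $b_1, b_2$.)
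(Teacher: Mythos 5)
Your proposal is correct and follows essentially the same route as the paper: the same one-sided Lyapunov function $(x-a_1)^{-\alpha}$ with a state-dependent prefactor, the same generator identity, the same sign analysis (bounded positive drift ratio in State 1, unbounded negative ratio in State 2 since $\dot{Y}(a_1,2)>0$), and the same supermartingale conclusion via Proposition 8.7 of Yin and Zhu \cite{YZ10}. The only differences are cosmetic — the paper writes the prefactor as $1+\alpha c_i$ rather than $1-\alpha c_i$ — and your writeup is in fact more explicit than the paper's about the order in which $c_1$, $c_2$, $\alpha$, and $\epsilon$ are chosen.
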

\begin{proof}
Define $V(x,i)=(1 + \alpha c_i ) (x-a_1)^{-\alpha}$, where $\alpha>0$, and $1-\alpha c_i>0$ for $i=1,2$. Then 
$$
LV(x,i) = \bigg\{  \frac{- \dot{Y}(x,i) }{ x-a_1 } + q_{ij} \frac{ c_i -c_j }{ 1 + \alpha c_i}\bigg\}\Big(\alpha V(x,i) \Big).
$$
Note that $\displaystyle \frac{- \dot{Y}(x,1) }{  x -a_1 }$ is bounded and positive, and $\displaystyle \frac{- \dot{Y}(x,2) }{  x -a_1 }$ is unbounded and negative. By the properties of $\displaystyle \frac{- \dot{Y}(x,2) }{  x -a_1 }$, one may find a $c_1$, $c_2$, and $\alpha$ so that $LV(x,1)<0$ in a neighborhood of $a_1$ \Big(intersected with $(a_1, \hat{a})$\Big). For completeness, call this neighborhood $(a_1,a_1+\epsilon_1)$ where $\epsilon_1>0$ is of appropriate size. Therefore, the point $x=a_1$ is unstable.
\end{proof}

\begin{re}
Lemmas \ref{0trans}, \ref{1trans}, \ref{a3trans}, and \ref{a1trans} display that the process will almost surely stay in the unit interval for all time. Therefore, the dynamic in Equation \eqref{smr} is well-defined.  
\end{re}

The previous lemmas display the evolution of the dynamic will naturally be restricted to some subinterval of $(a_1,a_3)$. Since the dynamic is invariant to a compact space, (mapping the state space to $\mathbb{R}$ will preserve the compact property), the dynamic should be \textit{\textbf{tight}}; recall that for a dynamic to be \textit{tight}, for any $\epsilon>0$ and initial condition $(x_0, i)$, there exists a compact set $K$ such that the transition probability $p_{x_0,i} \big( t, K \times \{1,2\} \big) \geq 1 - \epsilon $, for all time $t$. 

To display that the dynamic is tight, the existence of every moment will be shown. Utilizing the evolution displayed by the previous lemmas will yield this property. After the existence of every moment has been established, the tight property comes as a natural consequence.

\begin{prop}\label{moments}
For the dynamic defined by Equation \eqref{smr}, $\displaystyle \sup_{0 \leq t < \infty}E_{x_0,i} \big[  Y^p(t) \big] < \infty$. 
\end{prop}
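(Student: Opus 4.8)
The plan is to exhibit a Foster--Lyapunov drift inequality of the form $LV(x,i)\le K-\gamma V(x,i)$ for a function $V$ that is smooth on the open interval and blows up at both endpoints $0$ and $1$, and then to upgrade this, through Dynkin's formula applied to the time--weighted function $e^{\gamma t}V\big(Y(t)\big)$, to the uniform-in-$t$ moment bound. Concretely, take $V(x,i)=x^{-p}+(1-x)^{-p}$. Since this $V$ does not depend on $i$, the switching contribution $q_{ij}\big(V(x,j)-V(x,i)\big)$ to $LV$ vanishes identically, so $LV(x,i)=V'(x)\,\dot{Y}(x,i)=-p\,x^{-p-1}\dot{Y}(x,i)+p\,(1-x)^{-p-1}\dot{Y}(x,i)$.

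First I would verify the drift inequality near the boundary. From the formula for $\dot{Y}$ one has $\dot{Y}(0,i)=\mu_i>0$ and $\dot{Y}(1,i)=-\mu_i<0$, while Lemmas \ref{0trans} and \ref{1trans} give $\dot{Y}(x,i)>0$ on $(0,a_1)$ and $\dot{Y}(x,i)<0$ on $(a_3,1)$ for $i=1,2$. Hence near $x=0$ the dominant term $-p\,x^{-p-1}\dot{Y}(x,i)$ is negative and of order $x^{-1}V(x)$, which overwhelms any fixed multiple of $V(x)$ once $x$ is small; a symmetric statement holds near $x=1$. Thus there are $\delta_0,\delta_1>0$ with $LV(x,i)\le-\gamma V(x,i)$ on $(0,\delta_0)\cup(1-\delta_1,1)$ for any chosen $\gamma>0$. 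On the complementary compact interval $[\delta_0,1-\delta_1]$, both $V(\cdot,i)$ and $LV(\cdot,i)$ are continuous, hence bounded, for each of the two values of $i$, so enlarging a constant $K>0$ yields $LV(x,i)\le K-\gamma V(x,i)$ there as well. Gluing the three regions gives the global bound $LV(x,i)\le K-\gamma V(x,i)$ on $(0,1)\times\{1,2\}$.

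Next I would localise and run the standard weighting argument. Let $\theta_n=\inf\{t\ge0:x(t)\notin(1/n,1-1/n)\}$; this is a stopping time with $E_{x_0,i}[\theta_n\wedge t]\le t<\infty$, so Dynkin's formula applies to the space--time process with $f(x,i,s)=e^{\gamma s}V(x,i)$. Since $\big(\partial_s+L\big)f=e^{\gamma s}\big(\gamma V+LV\big)\le Ke^{\gamma s}$, this produces
\[
E_{x_0,i}\Big[e^{\gamma(\theta_n\wedge t)}V\big(Y(\theta_n\wedge t)\big)\Big]\le V(x_0,i)+\frac{K}{\gamma}\big(e^{\gamma t}-1\big).
\]
By the Remark following Lemma \ref{a1trans} the process $x(t)$ never leaves $(0,1)$, so $\theta_n\uparrow\infty$ almost surely; letting $n\to\infty$ and applying Fatou's lemma on the left gives $E_{x_0,i}\big[e^{\gamma t}V(Y(t))\big]\le V(x_0,i)+\tfrac{K}{\gamma}e^{\gamma t}$, and dividing by $e^{\gamma t}$ yields $E_{x_0,i}\big[V(Y(t))\big]\le e^{-\gamma t}V(x_0,i)+\tfrac{K}{\gamma}$. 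Taking the supremum over $t\ge0$ bounds $\sup_{t}E_{x_0,i}[V(Y(t))]$ by $V(x_0,i)+\tfrac{K}{\gamma}<\infty$; since $V(Y(t))$ dominates $x(t)^{-p}$ and $(1-x(t))^{-p}$ (and trivially $x(t)^{p}\le1$), the asserted $p$-th moment bound follows, uniformly in $t$.

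I expect the main obstacle to be pinning down the global drift inequality cleanly: verifying that near each endpoint the first-order term genuinely dominates $\gamma V$ uniformly over both Markov states, and then gluing this to the crude compact-set estimate so that the constants from the boundary regime and the interior regime do not conflict. The localisation via $\theta_n$ is routine once the Remark is invoked, and the passage to the uniform bound is the familiar $e^{\gamma t}$-weighting trick; neither should present real difficulty.
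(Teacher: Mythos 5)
Your argument is correct, but it takes a genuinely different route from the paper's. The paper works with $g(x,i)=x^p$ itself: it first invokes Lemmas \ref{0trans}, \ref{1trans}, \ref{a3trans}, and \ref{a1trans} together with the strong Markov property to reduce to an initial condition in $(a_1,a_3)$, computes the drift inequality $Lg(x,i)\le -C_p\,g(x,i)+B_p$ with constants obtained as extrema over that subinterval (the lower bound $a_1>0$ is what keeps $B_p$ finite when $p<1$), and then applies the same $e^{\lambda t}$-weighting you use to get $E_{x_0,i}[x^p(t)]\le e^{-\lambda t}x_0^p+\tfrac{B_p}{\lambda}(1-e^{-\lambda t})$. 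You instead build a Lyapunov function $x^{-p}+(1-x)^{-p}$ that explodes at both endpoints, prove a global Foster--Lyapunov inequality on all of $(0,1)$ (the switching term conveniently vanishes because your $V$ is state-independent), and conclude by localization, Fatou, and the observation $x^p\le 1$. What your approach buys: it bypasses the reduction to $(a_1,a_3)$ entirely, needing only the regularity recorded in the Remark after Lemma \ref{a1trans}, and it yields the stronger and more informative conclusion $\sup_{t}E_{x_0,i}\big[x^{-p}(t)+(1-x(t))^{-p}\big]<\infty$, i.e., uniform control on how much mass can accumulate near the boundary --- which is the substantive content here, since the literal assertion $\sup_t E_{x_0,i}[x^p(t)]<\infty$ for $p>0$ is immediate from $x(t)\in[0,1]$, as you note. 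What the paper's version buys is an explicit exponential-decay estimate for the $p$-th moment with constants tied to the payoffs. Both arguments are sound; the only hypothesis you should make explicit is that $\dot{Y}(0,i)=\mu_i>0$ and $\dot{Y}(1,i)=-\mu_i<0$ require $\mu_1>0$, which is implicit in the paper's standing assumption that $a_1$ and $a_3$ lie strictly inside the unit interval.
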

\begin{proof}
For an arbitrary $p > 0$, define $g(x,i)=x^p$ 
\begin{equation*}\begin{split}
Lg(x,i) & =px^{p-1} \Big\{ x \big[  b_1 + x (1-b_1) \big]  \big( 1 - \mu_i  - x  \big)   +   \big( 1- x \big) \big[ 1 + x (b_2 -1) \big] \big(  \mu_i -  x  \big) \Big\} +\Big[ q_{ij}g(x,j) - q_{ij}g(x,i) \Big] \\
& = - \Big( p x\big[ b_1 + x (1-b_1)\big] + p(1-x) \big[ 1 + x (b_2 -1)\big]  \Big) x^p \\
& +  \Big( px \big( 1- \mu_i \big)  \big[  b_1 + x (1-b_1) \big]    +  p\mu_i\big(  1 -  x \big) \big[ 1 + x (b_2 -1) \big]  \Big) x^{p-1}.
\end{split}\end{equation*}

By Lemmas \ref{0trans}, \ref{1trans}, \ref{a3trans}, and \ref{a1trans}, the dynamic is almost surely transient to and invariant to the interval $(a_1,a_3)$. By the strong Markov property, we may take the initial condition $x_0 \in (a_1,a_3)$. Define $\displaystyle C_p = \min_{a_1< x < a_3} p x\big[ b_1 + x (1-b_1)\big] + p(1-x) \big[ 1 + x (b_2 -1)\big] $ , $\displaystyle B_{p,i} = \max_{a_1<x<a_3}  \Big( px \big[  b_1 + x (1-b_1) \big] \big( 1- \mu_i \big)    +  p\mu_i\big(  1 -  x \big) \big[ 1 + x (b_2 -1) \big]  \Big) x^{p-1} $, and $B_p=\max\{ B_{p,1}, B_{p,2} \}$. One may see that $C_p$ and $B_p$ are strictly positive and bounded, and $Lg(x,i) \leq - C_p g(x,i) + B_p$. 

For $0<\lambda<C_p$, $L \big( e^{\lambda t} g(x,i) \big) = e^{\lambda t} L g(x,i) + \lambda e^{\lambda t} g(x,i) \leq  e^{\lambda t} B_p$, which implies $E_{x_0,i} \big[  x^p(t) \big] \leq e^{-\lambda t}x_0 + \frac{B_p}{\lambda} \left( 1 -  e^{-\lambda t} \right)$, and therefore $\displaystyle \sup_{0 \leq t < \infty}E_{x_0,i} \big[  x^p(t) \big] < \infty$.  
\end{proof}

\begin{cor}
The dynamic defined by Equation \eqref{smr} is tight.
\end{cor}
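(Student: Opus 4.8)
The plan is to obtain tightness from Markov's inequality applied to a uniform-in-time bound $\sup_{t\ge0}E_{x_0,i}\big[W(Y(t))\big]<\infty$ for a nonnegative $W$ whose sublevel sets are compact \emph{inside} $(0,1)$ --- this is the precise sense in which, as announced just before Proposition \ref{moments}, the tight property is a natural consequence of moment control. By the remark following Lemma \ref{a1trans}, $x(t)$ a.s.\ never reaches $\{0,1\}$, so the effective state space is the locally compact set $(0,1)\times\{1,2\}$, and for given $\epsilon>0$ and $(x_0,i)$ one must produce a compact subinterval $K_\epsilon\subset(0,1)$ with $p_{x_0,i}\big(t,K_\epsilon\times\{1,2\}\big)\ge1-\epsilon$ for all $t$. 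The function $g(x,i)=x^p$ of Proposition \ref{moments} does not by itself serve (it is bounded on $[0,1]$), so I would instead take $W(x)=-c\log\!\big(x(1-x)\big)$ for a small $c>0$ --- equivalently the sum of the Lyapunov functions used in Lemmas \ref{0trans} and \ref{1trans} --- which is continuous, bounded below, and blows up as $x\to0^+$ or $x\to1^-$, so that each set $\{x\in(0,1):W(x)\le N\}$ is a compact subinterval of $(0,1)$.

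The work is then to establish a geometric drift inequality $LW(x,i)\le-C_WW(x)+B_W$ on all of $(0,1)\times\{1,2\}$. Since $W$ is independent of $i$, the switching term vanishes and $LW(x,i)=W'(x)\dot Y(x,i)$; near $x=0$ we have $\dot Y(x,i)>0$ for $i=1,2$ (Lemma \ref{0trans}) and $W'(x)\sim-c/x$, so $LW(x,i)\sim-c\mu_i/x\to-\infty$, and symmetrically $LW(x,i)\to-\infty$ as $x\to1^-$ by Lemma \ref{1trans}; on the compact block $[a_1,a_3]\times\{1,2\}$ both $W$ and $LW$ are bounded by continuity. An elementary estimate then patches these regimes with a single pair of constants. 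Applying Dynkin's formula to $e^{C_W s}W\big(Y(s)\big)$ on $[0,\sigma_N\wedge t]$, where $\sigma_N$ is the first exit time from $\{W\le N\}$, then letting $N\to\infty$ and using monotone convergence --- exactly the mechanics of the proof of Lemma \ref{0trans} --- yields $E_{x_0,i}\big[W(Y(t))\big]\le e^{-C_W t}W(x_0)+\tfrac{B_W}{C_W}$, hence $M:=\sup_{t\ge0}E_{x_0,i}\big[W(Y(t))\big]<\infty$.

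Markov's inequality finishes it: $P_{x_0,i}\big(W(Y(t))>M/\epsilon\big)\le\epsilon$ for every $t$, so with $K_\epsilon:=\{x\in(0,1):W(x)\le M/\epsilon\}$, which is a compact subinterval of $(0,1)$, we get $p_{x_0,i}\big(t,K_\epsilon\times\{1,2\}\big)\ge1-\epsilon$ for all $t$, i.e.\ tightness. The step I expect to be the main obstacle is the geometric drift inequality of the second paragraph: the boundary behavior is essentially supplied by Lemmas \ref{0trans} and \ref{1trans} and the interior by compactness, but one must verify that they fit together with uniform constants. For completeness I would also record the more elementary argument that avoids moments altogether: Lemmas \ref{0trans} and \ref{1trans} show the drift points strictly inward on $(0,a_1)$ and on $(a_3,1)$ in both states, while the deterministic pieces of a trajectory cannot cross the fixed points $a_1$ and $a_3$; hence the switched trajectory started from $(x_0,i)$ never leaves the compact interval $\big[\min\{x_0,a_1\},\max\{x_0,a_3\}\big]\subset(0,1)$, so a single compact $K$ works for every $\epsilon$, with probability one.
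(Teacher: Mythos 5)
Your proof is correct, and it is genuinely more substantial than what the paper records: the paper's proof is a one-line appeal to Chebyshev's inequality applied to the moment bound of Proposition \ref{moments}, with the details omitted. You rightly observe that this route is delicate, because $g(x,i)=x^p$ is bounded on $[0,1]$ and its sublevel sets are not compact subsets of $(0,1)$; Markov/Chebyshev applied to $E_{x_0,i}[x^p(t)]$ at best controls the mass near $x=1$ and says nothing about the mass near $x=0$, and tightness with $K=[0,1]$ is vacuous since the ambient space is already compact. Your replacement Lyapunov function $W(x)=-c\log\big(x(1-x)\big)$, which blows up at both endpoints and has compact sublevel sets in $(0,1)$, together with the geometric drift inequality $LW\le -C_WW+B_W$ (the boundary behavior coming from the sign information in Lemmas \ref{0trans} and \ref{1trans}, the interior from continuity on $[a_1,a_3]$, and the switching term vanishing because $W$ does not depend on $i$), is exactly the standard way to make the Chebyshev/Markov step actually deliver compact containment away from $\{0,1\}$. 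Your closing pathwise remark --- that in both states the drift points inward on $(0,a_1)$ and $(a_3,1)$ and deterministic flows cannot cross the fixed points $a_1$ and $a_3$, so the trajectory is almost surely confined to the fixed compact interval $\big[\min\{x_0,a_1\},\max\{x_0,a_3\}\big]\subset(0,1)$ --- is an even more elementary argument that bypasses moments entirely and yields a single $K$ valid for every $\epsilon$; either version suffices, and both are more complete than what the paper provides.
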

\begin{proof}
An application of the well-known Chebyshev inequality will display this property. As this is fairly straight forward to observe this characteristic, the proof is omitted.
\end{proof}

Now that it has been established that the dynamic is invariant to and recurrent in the subinterval $(a_1,a_3)$,  one must consider the fixed points inside this state space, where the pull from the fixed points and the time to switch from each state will drive the long-run behavior. There are three cases to be explored: $b_2 > b_1$, $b_2 < b_1$, and $b_2 = b_1$. 

\subsection{The Inequality Case}\label{The Inequality Case}
In this section the long-run behavior of the case when $b_2 \neq b_1$ is established. Since the cases of $b_2 > b_1$ and $b_2 < b_1$ differ only by the size of the globally stable point, applying the logic created from one of the cases will be enough to establish results for both. The propositions will be established with the assumption $b_2 > b_1$ then corollaries are extended to $b_2 < b_1$. 

\begin{prop}\label{b2<b1trans}
For the dynamic defined by Equation \eqref{smr}, if $b_2 > b_1$ with the initial condition $x_0\in(\hat{a},a_2)$ and $i \in \{1,2\}$ then $E_{x_0,i}\big[ \tau^+_{\hat{a} }\big]<\infty$.
\end{prop}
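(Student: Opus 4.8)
The plan is to mimic the Lyapunov-function argument from Lemma \ref{0trans}, now on the subinterval $(\hat{a},a_2)$, using the fact that in this regime the deterministic drift $\dot{Y}(x,i)$ points \emph{toward} $\hat{a}$ in State 2 (where $\mu_2>\mu_c$ and $\hat{a}<a_2$ is the unique globally stable point), while in State 1 the drift on $(\hat{a},a_2)$ points away from $\hat{a}$ toward $a_3$ (since $a_2$ is the unstable fixed point for $\mu_1<\mu_c$). So the dominant pull back to $\hat{a}$ comes entirely from State 2, and the argument must show that this pull is strong enough to overcome the adverse State-1 drift after weighting by the stationary/transition structure of $r(t)$. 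Concretely, I would take $V(x,i)=c_i\log\!\big(\tfrac{1}{x-\hat{a}}\big)=-c_i\log(x-\hat{a})$, or a shifted variant, so that $V$ explodes as $x\downarrow\hat{a}$ from the right; since we are studying a hitting time $\tau^+_{\hat{a}}$ from the right this is the natural candidate.

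The key steps, in order, are: (i) compute $LV(x,i)=\dfrac{-c_i\,\dot{Y}(x,i)}{x-\hat{a}}+q_{ij}(c_i-c_j)\log(x-\hat{a})$, exactly as in Lemma \ref{0trans}; (ii) observe that on $(\hat{a},a_2)$ the quantity $\dfrac{\dot{Y}(x,2)}{x-\hat{a}}$ is bounded and strictly positive (so the State-2 term $\dfrac{-c_2\dot{Y}(x,2)}{x-\hat{a}}$ is bounded and strictly negative, bounded above by some $-\eta<0$), while $\dfrac{-c_1\dot{Y}(x,1)}{x-\hat{a}}$ is bounded but of the wrong sign; (iii) choose $c_1>c_2>0$ with $c_1-c_2\ll 1$ so that the "bad" State-1 contribution is small and, crucially, the sign-indefinite logarithmic coupling terms $q_{ij}(c_i-c_j)\log(x-\hat{a})$ are made small enough (uniformly on the bounded interval, noting $\log(x-\hat a)$ is bounded above on $(\hat a,a_2)$ but unbounded below near $\hat a$; here the product with $c_1-c_2$ must be controlled, which is where one needs the logarithmic singularity of $V$ itself to dominate) to conclude $LV(x,i)<-\kappa$ for some $\kappa>0$ and all $x\in(\hat{a},a_2)$, $i=1,2$; (iv) apply Dynkin's formula to $V\big(X(\tau^+_{\hat a}\wedge t)\big)$, use $V\ge 0$ (arrange the additive constant so this holds on the interval), derive $E_{x_0,i}[\tau^+_{\hat a}\wedge t]\le V(x_0,i)/\kappa$, and let $t\to\infty$ by monotone convergence, exactly as in Lemma \ref{0trans}.

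The main obstacle is step (iii): near $x=\hat{a}$ the coupling term $q_{12}(c_1-c_2)\log(x-\hat a)$ is negative and unbounded, which \emph{helps}, but near $x=a_2$ one of the two coupling terms is positive (since $\log(x-\hat a)$ may be positive there if $a_2-\hat a>1$, though in the unit interval it is in fact negative, so this is actually benign) — the real tension is that the sign of $c_i-c_j$ differs between $i=1$ and $i=2$, so for one state the log term has the "wrong" sign; I must verify that the bounded, strictly-negative State-2 drift term $-c_2\dot Y(x,2)/(x-\hat a)$ plus the small quantity $|c_1-c_2|\cdot\sup|\log(x-\hat a)|$ still nets out negative. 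Because $\log(x-\hat a)$ is bounded \emph{above} on the compact-away-from-$\hat a$ part and only blows up (to $-\infty$) near $\hat a$ where the drift term is already favorable in State 2, a sufficiently small choice of $c_1-c_2$ does the job; making this quantitative and checking it simultaneously for both $i=1$ and $i=2$ is the one genuinely delicate point. Everything else is routine and parallels Lemma \ref{0trans}, so after establishing the inequality I would simply invoke the convention stated after Lemma \ref{0trans} and conclude $E_{x_0,i}[\tau^+_{\hat a}]<\infty$.
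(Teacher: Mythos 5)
Your reading of the vector field on $(\hat a,a_2)$ is reversed, and this breaks the argument. For $b_2>b_1$ we have $a_1<\hat a<a_2$, so $(\hat a,a_2)\subset(a_1,a_2)$; in State 1 the point $a_2$ is the \emph{unstable} fixed point and $a_1$ the stable one, so on $(\hat a,a_2)$ the State-1 flow moves left, toward and through $\hat a$: $\dot Y(x,1)<0$. In State 2, $\hat a$ is the globally stable point and $x>\hat a$, so again $\dot Y(x,2)<0$. Thus \emph{both} states drift toward $\hat a$ --- which is exactly why the paper can say the proof is identical to Lemma \ref{1trans}, where both drifts are negative on $(a_3,1)$. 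Your claims that the State-1 drift points away from $\hat a$ toward $a_3$, and that $\dot Y(x,2)/(x-\hat a)$ is strictly positive, are both wrong (the latter would require $\dot Y(x,2)>0$ on $(\hat a,a_2)$, contradicting the global stability of $\hat a$ in State 2).

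Because of this, your Lyapunov function $V(x,i)=-c_i\log(x-\hat a)$, which explodes at the \emph{target}, cannot satisfy $LV\le-\kappa$. With the correct signs the drift contribution $-c_i\dot Y(x,i)/(x-\hat a)$ is positive for both states, and for $i=1$ it tends to $+\infty$ as $x\downarrow\hat a$ because $\hat a$ is not a fixed point of the State-1 dynamic, so $\dot Y(\hat a,1)<0$; hence $LV(x,1)\to+\infty$ near the target and no choice of $c_1,c_2,\kappa$ rescues the inequality. The paper instead anchors the singularity at the repelling end, taking $V(x,i)=-c_i\log(a_2-x)$, so that $LV(x,i)=c_i\dot Y(x,i)/(a_2-x)+q_{ij}(c_i-c_j)\log(a_2-x)$ has negative drift terms for both states, exactly as in Lemma \ref{1trans}. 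One further detail you would then need to handle: here the target $\hat a$ is a fixed point of State 2 (not of State 1, as $a_1$ and $a_3$ were in Lemmas \ref{0trans} and \ref{1trans}), so near $\hat a$ it is the State-2 drift term that vanishes and the coupling term must be made negative for $i=2$; this forces the ordering $c_2>c_1$, opposite to the choice $c_1>c_2$ used in those lemmas.
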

\begin{proof}
The proof is identical to the one given in Lemma \ref{1trans}, with the Lyaponuv function $V(x,i)=-c_i \log(a_2-x)$.
\end{proof}

For the case $b_2 < b_1$, Proposition \ref{b2>b1trans} gives a similar scenario of the evolution of the dynamic except the initial condition would be in interval $(a_2, \hat{a})$, which would reverse the flow of the dynamic. Utilizing the logic in the proposition above, this ``reverse flow'' may be be shown by the Lyapunov function $V(x,i)=-c_i \log(x - a_2)$.  This intuition gives the following corollary.

\begin{cor}\label{b2>b1trans}
For the dynamic defined by Equation \eqref{smr}, if $b_2 < b_1$ with the initial condition $x_0\in(a_2, \hat{a})$ and $i \in \{1,2\}$ then $E_{x_0,i}\big[ \tau^-_{\hat{a} }\big]<\infty$.
\end{cor}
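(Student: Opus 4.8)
The plan is to obtain Corollary~\ref{b2>b1trans} in two ways: first, and most cheaply, by reducing it to Proposition~\ref{b2<b1trans} through the reflection $x\mapsto 1-x$; and second, along the lines indicated in the text, by re-running the Lyapunov estimate of that proposition with $V(x,i)=-c_i\log(x-a_2)$, taking extra care near the target.

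For the reflection route I would use the identity $Y\big(x_2(t),r(t)\big)=1-Y\big(x_1(t),r(t)\big)$ recorded in Section~\ref{Long-Run Behavior}: replacing $x$ by $1-x$ interchanges the payoffs $b_1$ and $b_2$, leaving the Markov chain $r(t)$ untouched, so the $b_2<b_1$ dynamic becomes an instance of the $b_2>b_1$ dynamic handled in Proposition~\ref{b2<b1trans}, and it carries the fixed points $a_1<a_2<a_3$ and $\hat a$ to $1-a_3<1-a_2<1-a_1$ and $1-\hat a$. Under this map the interval $(a_2,\hat a)$ becomes $(1-\hat a,1-a_2)$, which lies to the \emph{right} of the globally stable point $1-\hat a$, and the hitting time $\tau^-_{\hat a}$ (approach from below) becomes $\tau^+_{1-\hat a}$ (approach from above) at the same physical time. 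Proposition~\ref{b2<b1trans} applied to the reflected dynamic therefore gives exactly $E_{x_0,i}\big[\tau^-_{\hat a}\big]<\infty$ for every $x_0\in(a_2,\hat a)$ and $i\in\{1,2\}$.

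For the direct route, note that $x-a_2<\hat a-a_2<1$ on $(a_2,\hat a)$, so $V(x,i)=-c_i\log(x-a_2)\ge 0$ there, and
\begin{equation*}
LV(x,i)=\frac{-c_i\,\dot Y(x,i)}{x-a_2}+q_{ij}\big(c_i-c_j\big)\log(x-a_2).
\end{equation*}
Since $b_2<b_1$ forces $a_2<\hat a<a_3$, both vector fields push to the right on the open interval: $\dot Y(x,1)>0$ on $(a_2,a_3)$ with $\dot Y(a_2,1)=0$, while $\dot Y(x,2)>0$ on $(a_2,\hat a)$ with $\dot Y(a_2,2)>0$ but $\dot Y(\hat a,2)=0$. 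Choosing $c_1>c_2>0$ with $c_1-c_2$ small: for $i=1$ both terms of $LV$ are negative and the switching term is bounded above by $q_{12}(c_1-c_2)\log(\hat a-a_2)<0$; for $i=2$ the drift term is negative and near $a_2$ blows up like $-1/(x-a_2)$, which (using $\dot Y(a_2,2)>0$) dominates the positive switching term $q_{21}(c_1-c_2)\big|\log(x-a_2)\big|$, while away from $a_2$ that term is uniformly small and the drift term is bounded away from zero. This gives $LV(x,i)<-\kappa$ on every set $(a_2,\hat a-\epsilon]$, and Dynkin's formula applied to $\tau^-_{\hat a-\epsilon}\wedge t$ together with monotone convergence, exactly as in Lemma~\ref{0trans}, yields $E_{x_0,i}\big[\tau^-_{\hat a-\epsilon}\big]<\infty$. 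Since the drift is strictly positive in both states on $(a_2,\hat a)$, the path is increasing there, so $\tau^-_{\hat a-\epsilon}\le\tau^-_{\hat a}$ and, by the strong Markov property at $\tau^-_{\hat a-\epsilon}$, it remains to bound $E_{x,i}\big[\tau^-_{\hat a}\big]$ uniformly over the strip $x\in[\hat a-\epsilon,\hat a)$. On that strip $\dot Y(\cdot,1)$ is bounded below by a positive constant ($\hat a$ is not a state-$1$ fixed point), so the bounded function $W(x,1)=A(\hat a-x)$, $W(x,2)=A(\hat a-x)+C$ with $A$ large relative to $C$ satisfies $LW<-\kappa'$ there and supplies the uniform bound; adding the two contributions gives $E_{x_0,i}\big[\tau^-_{\hat a}\big]<\infty$.

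The step I expect to be the genuine obstacle in the direct route is crossing this last strip. Because the state-$2$ field degenerates at the target, $\dot Y(\hat a,2)=0$, the single function $-c_i\log(x-a_2)$ contributes a strictly positive switching term near $\hat a$ for $i=2$ that the vanishing drift term cannot absorb, which is precisely why the situation is not literally the mirror image of Lemma~\ref{1trans}, where the state-$2$ field at the target $a_3$ is nonzero. Either the reflection reduction above, or the auxiliary affine Lyapunov function with a state-dependent offset on $[\hat a-\epsilon,\hat a)$, disposes of this point; all the other estimates are routine.
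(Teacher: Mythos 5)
Your proposal is correct, and your reflection route is essentially the paper's intended argument: the corollary is meant to follow from Proposition \ref{b2<b1trans} by ``reversing the flow,'' which is precisely the substitution $x\mapsto 1-x$ that swaps $b_1$ and $b_2$, carries the fixed points to their mirror images, and sends $\tau^-_{\hat a}$ to $\tau^+_{1-\hat a}$; your verification of how the interval and the fixed points transform is the content the paper leaves implicit. Your direct route, however, is genuinely more careful than what the paper sketches, and the obstacle you isolate is real. With $V(x,i)=-c_i\log(x-a_2)$ and $c_1>c_2$, one gets $LV(x,2)\to q_{21}(c_2-c_1)\log(\hat a-a_2)>0$ as $x\nearrow\hat a$, because the state-2 drift vanishes at its own fixed point $\hat a$ while the switching term does not; reversing the inequality to $c_2>c_1$ merely relocates the failure to the endpoint $a_2$, where the state-1 drift vanishes and the switching term blows up positively. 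So no single choice of constants makes the log function work uniformly on $(a_2,\hat a)$ --- and the same defect is present in the paper's own Proposition \ref{b2<b1trans}, whose target $\hat a$ is likewise the degenerate point of the state-2 field (unlike Lemma \ref{1trans}, where the state that degenerates at the target $a_3$ is state 1, whose switching term carries the favorable sign). Your two-stage repair --- the log function down to $\hat a-\epsilon$, then the affine pair $W(x,1)=A(\hat a-x)$, $W(x,2)=A(\hat a-x)+C$, for which $LW(x,1)\le -Am+q_{12}C$ and $LW(x,2)\le -q_{21}C$ on the final strip, with $m=\min_{[\hat a-\epsilon,\hat a]}\dot Y(\cdot,1)>0$ --- is sound and closes the gap, using correctly that $\hat a$ is not a fixed point of the state-1 field. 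In short, either of your routes proves the corollary, but the direct one also supplies the patch that Proposition \ref{b2<b1trans} itself needs, which the paper's one-line justification does not provide.
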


For $b_2 > b_1$ and initial condition $x\in (a_1, \hat{a})$ and $i \in \{1,2\}$, it will be shown that $\displaystyle P_{x,i} \Big( Y(t) \in (a_1, \hat{a}) \Big)=1$, and as a consequence the process is recurrent in a  subinterval $(a_1, \hat{a})$. To display this property, the endpoints $a_1$ and $\hat{a}$ will be shown to be unstable by applying the stochastic Lyapunov method. For a heuristic proof, consider the dynamic with the initial condition in the interval $(a_1, \hat{a})$ and $i=1$. Under these conditions, the process will evolve to the point $a_1$, but as the process gets closer to this point it slows down, giving the Markov chain enough time to jump to state 2 and flow away from $a_1$. In turn, this will show that there is a strict subinterval of $(a_1, \hat{a} )$ that the dynamic will hit in finite time. The strong Markov property establishes the recurrent property. For $b_2 < b_1$, applying the same heuristics one may see a similar property holds within the interval $(\hat{a}, a_3)$.

\begin{thm}\label{prop-pos-rec}
For the dynamic defined by Equation \eqref{smr} where $b_2 > b_1$ with initial conditions $x_0\in (a_1, \hat{a})$ and $i \in \{1,2\}$, the dynamic is invariant to the interval $(a_1, \hat{a})$, i.e., $\displaystyle P_{x_0,i} \Big( Y(t) \in (a_1, \hat{a}) \Big)=1$. Furthermore, the process is recurrent in $(a_1, \hat{a})$.
\end{thm}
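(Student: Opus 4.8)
The plan is to derive both conclusions---invariance of $(a_1,\hat a)$ and recurrence in it---from the single fact that each endpoint of the interval is a repelling boundary for the switched dynamic, established by the stochastic Lyapunov method with a Lyapunov function that blows up at that endpoint. For the left endpoint this is already available: Lemma \ref{a1trans} gives $V(x,i)=(1+\alpha c_i)(x-a_1)^{-\alpha}$ with $LV(x,i)<0$ on a one-sided neighborhood $(a_1,a_1+\epsilon_1)\subset(a_1,\hat a)$, and one checks that in fact $LV(x,i)<-\kappa$ there (near $a_1$ the State-2 bracket $-\dot Y(x,2)/(x-a_1)$ tends to $-\infty$, the State-1 bracket is a negative constant once $c_1>c_2$, and $V$ is bounded below on the neighborhood). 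The remaining work is the analogue at the right endpoint $\hat a$, after which both assertions follow quickly.

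First I would record why $\hat a$ repels from the left. Since $b_2>b_1$ we have $\hat a\in(a_1,a_2)$, so in State 1 the rest points flanking $\hat a$ are $a_1$ (stable) and $a_2$ (unstable), whence $\dot Y(x,1)<0$ throughout $(a_1,a_2)$ and in particular $\dot Y(\hat a,1)<0$; in State 2 the point $\hat a$ is the globally stable rest point, so $\dot Y(\hat a,2)=0$. Thus near $\hat a$ the State-1 segments drive the process away at a speed bounded away from $0$, while State 2 pulls it in only at a speed vanishing like $\hat a-x$. Mirroring Lemma \ref{a3trans} (with the roles of States 1 and 2 interchanged), put $V(x,i)=(1-\alpha c_i)(\hat a-x)^{-\alpha}$, $\alpha>0$, $1-\alpha c_i>0$; then
$$
LV(x,i)=\left\{\frac{\dot Y(x,i)}{\hat a-x}+q_{ij}\frac{c_i-c_j}{1-\alpha c_i}\right\}\bigl(\alpha V(x,i)\bigr),
$$
where $\dot Y(x,1)/(\hat a-x)\to-\infty$ as $x\uparrow\hat a$ while $\dot Y(x,2)/(\hat a-x)$ stays bounded and positive. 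Choosing $c_1>c_2>0$ with $c_1-c_2$ large (and $\alpha$ small) makes the State-2 bracket negative near $\hat a$, and the State-1 bracket is automatically negative there, so $LV(x,i)<-\kappa$ on some $(\hat a-\epsilon_2,\hat a)$ for $i=1,2$. By the exploding-Lyapunov-function criterion (Proposition 8.7 of Yin and Zhu \cite{YZ10}), $V\bigl(Y(t\wedge\sigma_2)\bigr)$ with $\sigma_2$ the first exit of $Y$ from this neighborhood is a nonnegative supermartingale whose expectation is at most $V(x_0,i)<\infty$; since $V\to\infty$ at $\hat a$, the process almost surely neither reaches nor converges to $\hat a$, and Dynkin's formula gives $E_{x_0,i}[\sigma_2]<\infty$ with $Y(\sigma_2)$ almost surely at $\hat a-\epsilon_2$.

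To assemble: a continuous sample path can leave $(a_1,\hat a)$ only by reaching $a_1$ or $\hat a$, and the two blow-up Lyapunov functions---together with the strong Markov property applied at the first time $Y$ enters the relevant boundary neighborhood---show each event has probability zero, so $P_{x_0,i}\bigl(Y(t)\in(a_1,\hat a)\text{ for all }t\bigr)=1$; as $Y$ then remains in a compact subset of $(0,1)$, it is regular. For recurrence, fix $\epsilon_1,\epsilon_2$ small enough that $K:=[a_1+\epsilon_1,\hat a-\epsilon_2]$ is a nonempty subinterval of $(a_1,\hat a)$. From $x_0\in(a_1,a_1+\epsilon_1)$ the bound $LV<-\kappa$ of Lemma \ref{a1trans} and Dynkin's formula give a finite expected exit time from that neighborhood, and since exit through $a_1$ has probability zero the process almost surely reaches $a_1+\epsilon_1\in K$; symmetrically from $(\hat a-\epsilon_2,\hat a)$ it reaches $\hat a-\epsilon_2\in K$. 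Since $(a_1,\hat a)\setminus K$ is precisely the union of these two neighborhoods, $P_{x_0,i}\bigl(\tau_{K\times\{1,2\}}<\infty\bigr)=1$ for every initial condition in $(a_1,\hat a)$, which is the stated recurrence; iterating with the strong Markov property gives returns to $K$ at arbitrarily large times.

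The main obstacle is the right endpoint $\hat a$, exactly because $\hat a$ is a \emph{stable} rest point of State 2: State 2 on its own drags the process toward $\hat a$ and $V(\cdot,2)$ increases along State-2 segments, so the Lyapunov inequality cannot be obtained one state at a time but must balance the two regimes against the switching rates $q_{12},q_{21}$. What saves it is that $\dot Y(x,1)/(\hat a-x)\to-\infty$ whereas $\dot Y(x,2)/(\hat a-x)$ is merely bounded, so with $c_1>c_2$ the State-1 contribution is not only negative but dominant near $\hat a$; the one computation that needs care is pinning down an admissible triple $(c_1,c_2,\alpha)$---with $1-\alpha c_i>0$ and the State-2 bracket negative on a genuine one-sided neighborhood of $\hat a$. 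Everything else is routine bookkeeping of the kind already carried out in Lemmas \ref{0trans}--\ref{a1trans}.
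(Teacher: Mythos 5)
Your proposal is correct and follows essentially the same route as the paper: instability of $a_1$ via Lemma \ref{a1trans}, instability of $\hat a$ via the mirror-image exploding Lyapunov function $V(x,i)=(1-\alpha c_i)(\hat a-x)^{-\alpha}$ (with the same identification that the State-1 drift term $\dot Y(x,1)/(\hat a-x)$ is unbounded and negative while the State-2 term is bounded and positive), and then recurrence to $[a_1+\epsilon_1,\hat a-\epsilon_2]$ from the finite expected exit times of the two boundary neighborhoods plus the strong Markov property. Your write-up is in fact somewhat more explicit than the paper's (the supermartingale/Dynkin bookkeeping and the assembly of the invariance claim), but the ideas and the Lyapunov functions are the same.
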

\begin{proof}
To prove this proposition, the points $a_1$ and $\hat{a}$ will be shown to be unstable. To begin the proof, note that under the assumptions, the inequalities $\dot{Y}(x,1) \leq 0$ and $\dot{Y}(x,2) \geq 0$ hold for $x\in (a_1, \hat{a})$.  Lemma \ref{a1trans} gives the property that $a_1$ is unstable.

For the point $x=\hat{a}$, define Define $V(x,i) = (1-\alpha c_i ) (\hat{a} - x)^{-\alpha}$, where $\alpha>0$, and $1-\alpha c_i>0$ for $i=1,2$. Then
$$
LV(x,i) = \bigg\{  \frac{ \dot{Y}(x,i) }{ \hat{a} - x } + q_{ij} \frac{ c_i -c_j }{ 1 -\alpha c_i}\bigg\}\alpha V(x,i).
$$
Since $\displaystyle \frac{ \dot{Y}(x,2) }{ \hat{a} - x }$ is bounded, one may find a $c_1$, $c_2$, and $\alpha$ so that $LV(x,2)<0$ in a neighborhood of $a_1$ \Big(intersected with $(a_1, \hat{a})$\Big). Furthermore, since $\displaystyle \frac{ \dot{Y}(x,1) }{ \hat{a} - x }$ is unbounded, whatever the values of $c_1$ and $c_2$, there exist a neighborhood of $a_1$ such that $LV(x,1)<0$, say $(\hat{a}-\epsilon_2,\hat{a})$, where $LV(x,i)<0$ for both values of $i$. Thus, the point $x=\hat{a}$ is unstable. 

To show the recurrent property, taking an initial condition $x\in(a_1,a_1+\epsilon_1)$ and $i=1,2$, or  $x\in(\hat{a}-\epsilon_2,\hat{a})$  and $i=1,2$, for the Lyapunov function defined above and following the logic in Lemma \ref{0trans}, we may conclude that $\displaystyle E_{x,i}\big[\tau_{a_1+\epsilon_1}^-]<\infty$ and $\displaystyle E_{x,i}\big[\tau_{\hat{a}-\epsilon_2}^+]<\infty$. Thus, the process hits the interval $(a_1+\epsilon_1 ,  \hat{a}-\epsilon_2)$ in finite time, and therefore by the strong Markov property, the process is recurrent.  
\end{proof}

\begin{cor}\label{cor-pos-rec}
For the dynamic defined by Equation \eqref{smr} where $b_2 < b_1$ with initial conditions $x_0\in (\hat{a}, a_3)$ and $i \in \{1,2\}$,  $\displaystyle P_{x_0,i} \Big( Y(t) \in (\hat{a}, a_3 ) \Big)=1$ and the dynamic is recurrent in $(\hat{a}, a_3 )$.
\end{cor}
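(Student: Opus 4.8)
The plan is to obtain Corollary~\ref{cor-pos-rec} from Theorem~\ref{prop-pos-rec} via the reflection symmetry recorded in Section~\ref{Long-Run Behavior}, $Y\big(x_2(t),r(t)\big)=1-Y\big(x_1(t),r(t)\big)$. The spatial change of variables $x\mapsto 1-x$ interchanges the two types, hence swaps $b_1$ and $b_2$, carrying the regime $b_2<b_1$ onto the regime $b_2>b_1$; it carries the ordered fixed points $a_1<a_2<a_3$ to $1-a_3<1-a_2<1-a_1$ and the point $\hat{a}$ to $1-\hat{a}$, so the interval $(\hat{a},a_3)$ becomes an interval of the form $(a_1',\hat{a}')$ for the reflected dynamic. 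Since the chain $r(t)$ is untouched by this spatial reflection, $1-x(t)$ is exactly the switched replicator--mutator dynamic of Equation~\eqref{smr} for the reflected game, so Theorem~\ref{prop-pos-rec} applied to it gives $P_{x_0,i}\big(1-x(t)\in(a_1',\hat{a}')\big)=1$ and recurrence in $(a_1',\hat{a}')$; translating back yields $P_{x_0,i}\big(Y(t)\in(\hat{a},a_3)\big)=1$ and recurrence in $(\hat{a},a_3)$.

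For a self-contained argument mirroring the proof of Theorem~\ref{prop-pos-rec}, one shows the endpoints $a_3$ and $\hat{a}$ are both unstable for initial data in $(\hat{a},a_3)$. When $b_2<b_1$ we have $a_2<\hat{a}<a_3$, and on $(\hat{a},a_3)$ the drift satisfies $\dot{Y}(x,1)\geq 0$ (the state-$1$ flow is drawn to the stable point $a_3$) and $\dot{Y}(x,2)\leq 0$ (the state-$2$ flow returns to the globally stable $\hat{a}$). Instability of $a_3$ is immediate from Lemma~\ref{a3trans}. For $\hat{a}$, take the mirrored Lyapunov function $V(x,i)=(1-\alpha c_i)(x-\hat{a})^{-\alpha}$ with $\alpha>0$ and $1-\alpha c_i>0$, for which
$$
LV(x,i)=\bigg\{\frac{-\dot{Y}(x,i)}{x-\hat{a}}+q_{ij}\,\frac{c_i-c_j}{1-\alpha c_i}\bigg\}\,\alpha V(x,i).
$$
Near $\hat{a}$ from the right, $-\dot{Y}(x,1)/(x-\hat{a})$ is unbounded and negative (since $\hat{a}$ is not a fixed point in state $1$), so $LV(x,1)<0$ on some neighbourhood $(\hat{a},\hat{a}+\epsilon_2)$ regardless of $c_1,c_2$; and $-\dot{Y}(x,2)/(x-\hat{a})$ is bounded and positive (since $\hat{a}$ is a fixed point in state $2$), so taking $c_1>c_2$ with $c_1-c_2$ large enough, then $\alpha$ small enough to keep $1-\alpha c_i>0$, forces $LV(x,2)<0$ on the same neighbourhood. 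The supermartingale and explosion argument used in Lemma~\ref{a3trans} and Theorem~\ref{prop-pos-rec} then shows $x=\hat{a}$ is unstable.

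Recurrence follows the template of Lemma~\ref{0trans}: on the one-sided neighbourhoods $(\hat{a},\hat{a}+\epsilon_2)$ and $(a_3-\epsilon,a_3)$, where the exploding Lyapunov functions satisfy $LV<-\kappa$ for a positive constant $\kappa$, Dynkin's formula and monotone convergence give $E_{x,i}\big[\tau^-_{\hat{a}+\epsilon_2}\big]<\infty$ and $E_{x,i}\big[\tau^+_{a_3-\epsilon}\big]<\infty$, so the process reaches the compact subinterval $(\hat{a}+\epsilon_2,a_3-\epsilon)$ in finite expected time, and the strong Markov property upgrades this to recurrence in $(\hat{a},a_3)$. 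I expect no genuine obstacle: the only care needed is bookkeeping the reflection correctly --- equivalently, tracking the signs and boundedness of $\dot{Y}(\cdot,i)$ near each endpoint --- and checking that a single choice of $c_1,c_2,\alpha$ makes $LV<0$ simultaneously for $i=1,2$ on a common one-sided neighbourhood of $\hat{a}$; both are routine given Lemmas~\ref{a3trans} and~\ref{0trans} and Theorem~\ref{prop-pos-rec}.
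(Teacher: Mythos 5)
Your proposal is correct and matches the paper's intent: the corollary is stated there without proof, the authors relying on the reflection symmetry $Y\big(x_2(t),r(t)\big)=1-Y\big(x_1(t),r(t)\big)$ and the analogy with Theorem~\ref{prop-pos-rec}, which is exactly the route you take. Your mirrored Lyapunov computation near $\hat{a}$ (drift ratio unbounded and negative in state $1$, bounded and positive in state $2$, handled by choosing $c_1>c_2$ with $c_1-c_2$ large and $\alpha$ small) correctly supplies the details the paper leaves implicit.
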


With the initial condition $x_0\in(a_2,a_3)$, recall that State 1 corresponds to the dynamic with the mutator term with a smaller value than the critical point, and State 2 is the dynamic with the mutator term with a larger value than the critical point. If the transition probability towards the second state is large enough then the expected time of hitting the interval discussed in Proposition \ref{b2<b1trans} is finite. Ergo, the evolution of the dynamic is quite sensitive to the values of the Markov generator. This sensitivity dictates the long-run characteristic of the process, hence, where the dynamic will evolve in the subinterval of $(a_2,a_3)$ or $(a_1,\hat{a})$, accordingly.

\begin{thm}\label{trans-prop}
For the dynamic defined by Equation \eqref{smr} where $b_2 > b_1$, then there exists an $\epsilon>0$ such that for an initial condition $x_0\in(a_2,a_2+\epsilon)$, the process is almost surely transient. 
\end{thm}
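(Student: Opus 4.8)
The plan is to show that from any $x_0\in(a_2,a_2+\epsilon)$ the trajectory almost surely enters the interval $(a_1,\hat a)$ in finite time and then, by Theorem~\ref{prop-pos-rec} applied at that entrance time via the strong Markov property, never leaves it again; since the process thereafter stays bounded away from $a_3$, it cannot be recurrent with respect to a neighborhood of $a_3$, i.e.\ it is transient. The only ingredients needed are the sign patterns of the two drifts under $b_2>b_1$: on $(a_2,a_3)$ one has $\dot Y(\cdot,1)\ge 0$ (the state-$1$ flow is repelled from the unstable point $a_2$ toward the stable point $a_3$) and $\dot Y(\cdot,2)\le-\delta<0$ on the compact set $[a_2,a_3]$ (the state-$2$ flow is drawn toward $\hat a<a_2$); and on $(a_1,a_2)$ one has $\dot Y(\cdot,1)<0$ throughout, while $\dot Y(\cdot,2)<0$ on $(\hat a,a_2)$ and $\dot Y(\cdot,2)>0$ on $(a_1,\hat a)$. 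Finally, since $\dot Y(a_3,1)=0$ and $\dot Y(a_3,2)<0$, a trajectory started in $(a_2,a_3)$ a.s.\ stays strictly below $a_3$ for all finite time.

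The first step is to prove that $\sigma:=\inf\{t\ge 0:x(t)\le a_2\}<\infty$ almost surely. The chain $r(\cdot)$ is independent of the dynamic and irreducible on $\{1,2\}$, so it visits state $2$ infinitely often, and its successive sojourn lengths there are i.i.d.\ $\mathrm{Exp}(q_{21})$; hence, setting $T:=(a_3-a_2)/\delta$, the events ``the $k$-th state-$2$ sojourn lasts at least $T$'' are independent with common probability $e^{-q_{21}T}>0$, and by the second Borel--Cantelli lemma infinitely many of them occur a.s. If $x$ has not already dropped to $a_2$, then at the first such sojourn $x$ still lies in $(a_2,a_3)$, and the bound $\dot x\le-\delta$ (valid while $x\in[a_2,a_3]$) together with $x<a_3$ at the start forces $x$ below $a_2$ before that sojourn ends; in all cases $\sigma<\infty$ a.s.

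For the second step, from time $\sigma$ the trajectory lies in $(a_1,a_2]$, and I claim it reaches $(a_1,\hat a)$ in a.s.\ finite time. On $(\hat a,a_2)$ both drifts are strictly negative, so $x$ is nonincreasing while it stays there; being bounded below by $\hat a$ it converges, and the limit can only be $\hat a$, since at any interior limit point the state-$1$ drift, being strictly negative (hence bounded away from zero near that point) and experienced for an unbounded total amount of time, would contradict convergence. Because $\dot Y(\hat a,1)<0$, once $x$ is within a suitable $\eta$ of $\hat a$ any state-$1$ sojourn longer than a fixed $T_1$ carries $x$ strictly below $\hat a$, and such a sojourn occurs a.s.\ (second Borel--Cantelli again). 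Thus $x$ enters $(a_1,\hat a)$ at an a.s.\ finite stopping time $\sigma'$; applying Theorem~\ref{prop-pos-rec} at $\sigma'$ yields $P_{x_0,i}\big(x(t)\in(a_1,\hat a)\ \text{for all}\ t\ge\sigma'\big)=1$, so the process is absorbed there and never revisits a neighborhood $(a_3-\eta',a_3]$ of $a_3$ after $\sigma'$. Taking $\epsilon<\eta'$ and conditioning on the very first state-$2$ sojourn being of length at least $\epsilon/\delta$ (an event of positive probability) forces $\sup_t x(t)<a_3-\eta'$, so the process misses that neighborhood for all time with positive probability; that is, it is transient.

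The main obstacle is the second step. The two drifts degenerate at opposite ends of $[\hat a,a_2]$ — $\dot Y(\cdot,1)$ vanishes at $a_2$ and $\dot Y(\cdot,2)$ vanishes at $\hat a$ — so there is no uniform rate to integrate, and, unlike in Lemmas~\ref{a3trans}, \ref{a1trans} and Theorem~\ref{prop-pos-rec}, the Lyapunov function that blows up at $a_2$ is useless here, since $LV(\cdot,2)$ has the wrong sign just to the right of $a_2$ (the state-$2$ flow drives the process through $a_2$ rather than toward it). Closing this gap requires combining the monotonicity of $x$ on $(\hat a,a_2)$ with Borel--Cantelli arguments that place the process in the favorable Markov state — state $2$ on $(a_2,a_3)$, state $1$ near $a_2$ and near $\hat a$ — for a long enough stretch at exactly one opportune moment, and the independence of $r(\cdot)$ from $x(\cdot)$ is what licenses those Borel--Cantelli applications.
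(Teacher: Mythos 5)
Your argument is correct, but it is a genuinely different (and in fact stronger) argument than the paper's. The paper works purely locally at $a_2$: it takes the Lyapunov function $V(x,i)=(1-\alpha c_i)(x-a_2)^{\alpha}$ with a \emph{positive} exponent, exploits the fact that $\dot Y(x,2)/(x-a_2)\to-\infty$ as $x\searrow a_2$ while $\dot Y(x,1)/(x-a_2)$ stays bounded to get $LV(x,i)<0$ on some $(a_2,a_2+\epsilon)$, and then invokes Theorem~3 of Meyn and Tweedie to conclude $P_{x,i}\big(\tau^{+}_{a_2+\epsilon_0}=\infty\big)>0$; it never follows the path down to $(a_1,\hat a)$. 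You instead give a global sample-path argument: independence of $r(\cdot)$ from $x(\cdot)$ plus Borel--Cantelli guarantees an arbitrarily long state-$2$ sojourn, during which the uniformly negative state-$2$ drift on $[a_2,a_3]$ drags the process below $a_2$; monotonicity on $(\hat a,a_2)$ plus one long state-$1$ sojourn near $\hat a$ then delivers it into $(a_1,\hat a)$, where Theorem~\ref{prop-pos-rec} traps it. Both routes are sound (your last paragraph needs the minor repair that if $r(0)=1$ you must also condition on the first state-$1$ sojourn being short, so that $x$ does not climb near $a_3$ before the long state-$2$ sojourn arrives, but that is a positive-probability event by the same independence). What each buys: the paper's Lyapunov computation is the one that generalizes to the companion statements (Corollary~\ref{full-trans}, Proposition~\ref{localrecurrent}) by tuning $q_{12},q_{21}$ against the drift ratios; your argument is more elementary and yields a much stronger conclusion with no tuning at all.

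You should be aware of one consequence of that extra strength. Your first step shows that for \emph{every} choice of $q_{12},q_{21}>0$ and every $x_0\in(a_2,a_3)$ the process almost surely falls below $a_2$ in finite time (long state-$2$ sojourns occur a.s., and any one of them suffices), and your second step shows it is then absorbed in $(a_1,\hat a)$. This is not merely stronger than Theorem~\ref{trans-prop}; it directly contradicts Proposition~\ref{localrecurrent} of the paper, which asserts that for suitable generator values the dynamic is recurrent in a subinterval of $(a_2+\hat\epsilon,a_3)$. Since I can find no gap in your Borel--Cantelli step (the sojourn lengths of the autonomous chain really are i.i.d.\ exponentials independent of the current position, and the state-$2$ drift really is bounded away from zero on $[a_2,a_3]$ when $b_2>b_1$), the tension lies with the paper's later proposition rather than with your proof; but you should flag this explicitly rather than leave the reader to discover the inconsistency.
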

\begin{proof}
It is first shown that there exists a neighborhood of $a_2$ such that the expected hitting time of $x=a_2$ is finite. Similar to Proposition \ref{b2<b1trans}, define $V(x,i)=(1-\alpha c_i)\big(x-a_2)^{\alpha}$. We then we see that for $i\in\{1,2\}$,
$$
LV(x,i)=\bigg\{  \frac{ \dot{Y}(x,i) }{ x-a_2  } + q_{ij} \frac{ c_i -c_j }{ 1 -\alpha c_i}\bigg\}\Big(\alpha V(x,i) \Big).
$$
Note for $x\in(a_2,a_3)$, $\displaystyle \frac{\dot{Y}(x,1) }{  x - a_2} > 0$, $\displaystyle \frac{\dot{Y}(a_3,1) }{  a_3 - a_2} = 0$, and $\displaystyle \frac{\dot{Y}(a_2,1) }{  x - a_2 } \longrightarrow c$ as $x\searrow a_2$ for some $c\in\mathbb{R}$. From this observation we may conclude $\displaystyle \frac{\dot{Y}(x,1) }{  x - a_2}$ is bounded in this interval. Moreover, $\displaystyle \frac{\dot{Y}(a_3, 2) }{  a_3 - a_2} < 0$ and $\displaystyle \frac{\dot{Y}(x,2) }{  x - a_2 } \longrightarrow -\infty$ as $x\searrow a_2$, we may find values of $\alpha$, $c_1$, and $c_2$ where there exists an $\epsilon>0$ such for $x\in(a_2,a_2+\epsilon)$, $LV(x,i)<0$.

To finish the proof, Theorem 3 in Myen and Tweedie \cite{MT93_1} will be applied. Take a constant $\epsilon_0$, where $0<\epsilon_0< \epsilon$, $M$ as the Lebesgue measure, and $\mathfrak{B}$ as the Borel $\sigma$-algebra. Then for $A \in \mathfrak{B}\Big( (0,1) \Big)$, define $\Phi\big( A \big) = M\Big( A \cap (a_2,a_2+\epsilon_0) \Big)$. One may see that the measure $\Phi$ is an irreducible measure (see  \cite{MT93,MT93_1,DMT95} for further information). For $x\in (a_2,a_2+\epsilon_0)$, we may conclude that $P_{x,i}\big( \tau_{a_2+\epsilon_0}^+=\infty \big)>0$.  Therefore, the assumptions of Theorem 3 hold and our dynamic is transient.
\end{proof}

The logic in Proposition \ref{trans-prop} displayed the existence of an $\epsilon$ by utilizing the unboundedness of the term $\displaystyle \frac{\dot{Y}(x,2) }{  x - a_2 }$ at $x=a_2$ for an arbitrary dynamic. Further inspection of the proof shows the term $\displaystyle q_{ij} \frac{ c_i -c_j }{ 1 -\alpha c_i}$ dictates the size of $\epsilon$, yielding the potential of $\epsilon$ to be large enough to cover the size of the interval $(a_2,a_3)$, contingent on the values of $q_{12}$ and $q_{21}$. Application of the same Lyapunov function yields the following corollary.

\begin{cor}\label{full-trans}
For the dynamic defined by Equation \eqref{smr} where $b_2 > b_1$, and given mutation rates $\mu_1$ and $\mu_2$, there exists values of $q_{21}$ and $q_{12}$ where the dynamic is transient in the interval $(a_2 ,a_3)$.
\end{cor}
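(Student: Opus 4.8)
The plan is to re-use the Lyapunov function from Theorem~\ref{trans-prop} verbatim, but to keep explicit track of how the transition rates $q_{12},q_{21}$ enter the sign estimate, and then to choose them so that the ``good neighbourhood'' of $a_2$ produced there actually fills the whole of $(a_2,a_3)$. Concretely, take $V(x,i)=(1-\alpha c_i)(x-a_2)^{\alpha}$ with $\alpha>0$ and $0<c_1<c_2$ chosen so that $1-\alpha c_i>0$ for $i=1,2$. Exactly as in Theorem~\ref{trans-prop},
\begin{equation*}
LV(x,i)=\bigg\{\frac{\dot{Y}(x,i)}{x-a_2}+q_{ij}\frac{c_i-c_j}{1-\alpha c_i}\bigg\}\,\alpha V(x,i),
\end{equation*}
and since $\alpha V(x,i)>0$ it suffices to force the bracketed factor to be negative for every $x\in(a_2,a_3)$ and both $i=1,2$.

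Next I would record the two uniform bounds that make this possible on the entire interval. In State~$1$ one has $a_1<a_2<a_3$ with $a_2$ unstable, so $\dot{Y}(x,1)>0$ on $(a_2,a_3)$ with a simple zero at $a_2$; hence $x\mapsto \dot{Y}(x,1)/(x-a_2)$ extends continuously to the compact set $[a_2,a_3]$ and is bounded there by some $M_1>0$. In State~$2$ the unique real fixed point is $\hat{a}$ and $b_2>b_1$ puts $\hat{a}<a_2$, so, $\hat{a}$ being globally stable, $\dot{Y}(\cdot,2)<0$ throughout the compact set $[a_2,a_3]$ and therefore $\dot{Y}(x,2)/(x-a_2)\le -m_2$ on $(a_2,a_3)$ for some $m_2>0$ (indeed this ratio $\to-\infty$ as $x\searrow a_2$). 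Because $c_1<c_2$, the switching term $q_{12}\,(c_1-c_2)/(1-\alpha c_1)$ is negative and $q_{21}\,(c_2-c_1)/(1-\alpha c_2)$ is positive, so the bracket is at most $M_1-q_{12}\,(c_2-c_1)/(1-\alpha c_1)$ when $i=1$ and at most $-m_2+q_{21}\,(c_2-c_1)/(1-\alpha c_2)$ when $i=2$.

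It then remains to pick the rates. Any $q_{12}>M_1(1-\alpha c_1)/(c_2-c_1)$ and any $q_{21}\in\big(0,\,m_2(1-\alpha c_2)/(c_2-c_1)\big)$ make both upper bounds strictly negative, hence $LV(x,i)<0$ for all $x\in(a_2,a_3)$ and $i=1,2$; these two ranges are non-empty and may be chosen independently, so such $q_{12},q_{21}$ exist. (Heuristically the process then spends almost all its time in State~$2$, whose flow points out of $(a_2,a_3)$, which is why transience is plausible.) With $LV<0$ on all of $(a_2,a_3)$, I would finish by invoking Theorem~3 of Meyn and Tweedie~\cite{MT93_1} exactly as in Theorem~\ref{trans-prop}: for any $\epsilon_0\in(0,a_3-a_2)$ set $\Phi(A)=M\big(A\cap(a_2,a_2+\epsilon_0)\big)$, note that $P_{x,i}\big(\tau^+_{a_2+\epsilon_0}=\infty\big)>0$ for $x\in(a_2,a_2+\epsilon_0)$, and conclude that the dynamic is transient in $(a_2,a_3)$.

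The main obstacle is not a single hard step but keeping the estimates \emph{uniform} over the whole interval: one must verify that $M_1$ (the bound on the State-$1$ drift ratio) and the \emph{strictly negative} bound $-m_2$ (on the State-$2$ drift ratio) hold on all of $(a_2,a_3)$ rather than merely near $a_2$, which is exactly where continuity on $[a_2,a_3]$ and the location $\hat{a}<a_2$ get used, and then to see that the requirements ``$q_{12}$ large'' and ``$q_{21}$ small'' are compatible --- which they are, since the two off-diagonal rates are free parameters of $Q$. A secondary point is to confirm that the Meyn--Tweedie transience criterion applies with the irreducibility measure supported on a subinterval of $(a_2,a_3)$ whose closure still misses the recurrence region of Theorem~\ref{prop-pos-rec}.
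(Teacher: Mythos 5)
Your proposal is correct and follows essentially the same route as the paper: the paper justifies this corollary by the remark preceding it, namely that in the proof of Theorem~\ref{trans-prop} the switching term $q_{ij}\frac{c_i-c_j}{1-\alpha c_i}$ controls the size of the neighbourhood where $LV<0$, so that suitable choices of $q_{12}$ and $q_{21}$ extend it to all of $(a_2,a_3)$. You have simply made the paper's sketch quantitative, with the explicit uniform bounds $M_1$ and $-m_2$ and the explicit thresholds ``$q_{12}$ large, $q_{21}$ small,'' which is a faithful filling-in of the intended argument.
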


As the logic in Lemma \ref{b2>b1trans} displayed, considering the case when $b_2 > b_1$, since $(a_2,\hat{a})$ only small changes in the Lyapunov function are needed to derive an analogous result. With this consideration, Proposition \ref{trans-prop} and Lemma \ref{full-trans} yield a foundation for inferring analogous result, encompassed in the following two corollaries.

\begin{cor}\label{b2>b1trans-prop}
For the dynamic defined by Equation \eqref{smr} where $b_2 < b_1$, there exists an $\epsilon>0$ such that for an initial condition $x_0\in( a_2 - \epsilon, a_2)$ the process is almost surely transient. 
\end{cor}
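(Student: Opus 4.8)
The plan is to reproduce the argument of Theorem~\ref{trans-prop} under the reflection $x\mapsto a_2-x$: when $b_2<b_1$ the configuration near $a_2$ on its \emph{left} side mirrors the configuration near $a_2$ on its \emph{right} side that was used when $b_2>b_1$, since State~1 still has an unstable equilibrium at $a_2$ while in State~2 the globally stable point now lies to the right, $a_2<\hat a<a_3$. Concretely I would take the Lyapunov function $V(x,i)=(1-\alpha c_i)(a_2-x)^{\alpha}$ with $\alpha>0$ and $1-\alpha c_i>0$, for which a short computation gives
$$LV(x,i)=\bigg\{ -\frac{\dot{Y}(x,i)}{a_2-x}+q_{ij}\frac{c_i-c_j}{1-\alpha c_i}\bigg\}\Big(\alpha V(x,i)\Big).$$
The sign information needed is that, for $x\in(a_1,a_2)$, one has $\dot{Y}(x,1)<0$ with $\dot{Y}(a_2,1)=0$, so $-\dot{Y}(x,1)/(a_2-x)$ is bounded and positive on a left-neighborhood of $a_2$, tending to $\dot{Y}'(a_2,1)>0$ as $x\nearrow a_2$; whereas $a_2$ is not an equilibrium of the State~2 flow and $\dot{Y}(a_2,2)>0$, so $-\dot{Y}(x,2)/(a_2-x)\to-\infty$ as $x\nearrow a_2$.

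With these facts I would pick $c_2>c_1>0$ and $\alpha$ small enough that $1-\alpha c_i>0$ and the switching drift $q_{12}(c_2-c_1)/(1-\alpha c_1)$ exceeds the bounded positive limit $\dot{Y}'(a_2,1)$; this forces the State~1 brace negative on a left-neighborhood of $a_2$. For State~2 the unbounded term drives the brace to $-\infty$ near $a_2$ regardless of the constants, so after possibly shrinking the neighborhood there is $\epsilon>0$ with $LV(x,i)<0$ for all $x\in(a_2-\epsilon,a_2)$ and $i\in\{1,2\}$.

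Given this Lyapunov inequality, I would conclude exactly as in the last paragraph of the proof of Theorem~\ref{trans-prop}: fixing $0<\epsilon_0<\epsilon$, letting $M$ be Lebesgue measure and setting $\Phi(A)=M\big(A\cap(a_2-\epsilon_0,a_2)\big)$ gives an irreducible measure, the supermartingale property of $V$ yields $P_{x,i}\big(\tau^-_{a_2-\epsilon_0}=\infty\big)>0$ for $x\in(a_2-\epsilon_0,a_2)$, and Theorem~3 of \cite{MT93_1} then gives transience. Alternatively the statement can be deduced from Theorem~\ref{trans-prop} directly through the symmetry $Y\big(x_2(t),r(t)\big)=1-Y\big(x_1(t),r(t)\big)$, which conjugates the $b_2<b_1$ dynamic near $a_2$ from the left to the $b_2>b_1$ dynamic near its middle fixed point from the right and carries transience across.

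The one point needing genuine care---not a real obstacle---is the State~1 estimate: unlike State~2 there is no blow-up to exploit, so one must verify that the admissible constants can be chosen to make $q_{12}(c_2-c_1)/(1-\alpha c_1)$ beat $\dot{Y}'(a_2,1)$ while still keeping $1-\alpha c_i>0$; letting $\alpha\downarrow 0$ with $c_2-c_1$ a fixed multiple of $\dot{Y}'(a_2,1)/q_{12}$ makes this clear, so $\epsilon$ may indeed be taken strictly positive.
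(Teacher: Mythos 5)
Your proposal is correct and follows essentially the route the paper intends: the corollary is stated without proof on the grounds that ``only small changes in the Lyapunov function are needed,'' and you have supplied exactly those changes, mirroring Theorem~\ref{trans-prop} with $V(x,i)=(1-\alpha c_i)(a_2-x)^{\alpha}$, the correct sign analysis ($\dot{Y}(\cdot,1)$ vanishing linearly at $a_2$ from the left versus $\dot{Y}(a_2,2)>0$ forcing the blow-up), and the same Meyn--Tweedie transience criterion. The explicit check that the constants $c_2>c_1$ and $\alpha\downarrow 0$ can be chosen to dominate $\dot{Y}'(a_2,1)$ is a welcome addition rather than a deviation.
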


\begin{cor}\label{b2>bfull-trans}
For the dynamic defined by Equation \eqref{smr} where $b_2 < b_1$, and given mutation rates $\mu_1$ and $\mu_2$, there exists values of $q_{21}$ and $q_{12}$ where the dynamic is transient in the interval $(a_1 ,a_2)$.
\end{cor}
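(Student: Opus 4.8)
The plan is to repeat the argument of Theorem \ref{trans-prop} and Corollary \ref{full-trans} with everything reflected through the midpoint $1/2$, i.e.\ to use the mirrored Lyapunov function $V(x,i)=(1-\alpha c_i)\bigl(a_2-x\bigr)^{\alpha}$ with $\alpha>0$ and $1-\alpha c_i>0$ for $i=1,2$ (this is the function already anticipated in Corollary \ref{b2>b1trans-prop}). Exactly as there one computes
\[
LV(x,i)=\left\{\frac{-\dot{Y}(x,i)}{a_2-x}+q_{ij}\frac{c_i-c_j}{1-\alpha c_i}\right\}\bigl(\alpha V(x,i)\bigr),
\]
so the first step is to record the two structural facts on $(a_1,a_2)$ in the regime $b_2<b_1$: since $a_2$ is a fixed point in State $1$ we have $\dot Y(x,1)\le 0$ with $\dot Y(a_1,1)=\dot Y(a_2,1)=0$, hence $-\dot Y(x,1)/(a_2-x)$ is bounded and nonnegative there; whereas the globally stable point $\hat a$ of State $2$ lies strictly to the right of $a_2$, so $\dot Y(x,2)\ge 0$ on $(a_1,a_2)$ with $\dot Y(a_2,2)>0$, and therefore $-\dot Y(x,2)/(a_2-x)$ is negative on $(a_1,a_2)$ and tends to $-\infty$ as $x\nearrow a_2$.

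With these facts in hand, the second step is the choice of constants. Taking $c_2>c_1>0$ makes $c_1-c_2<0$, so the State-$1$ drift term $q_{12}(c_1-c_2)/(1-\alpha c_1)$ is negative and, because $-\dot Y(x,1)/(a_2-x)$ is bounded above on $(a_1,a_2)$, it can be made to dominate that bound by taking $q_{12}$ large; the State-$2$ drift term $q_{21}(c_2-c_1)/(1-\alpha c_2)$ is positive, but $-\dot Y(x,2)/(a_2-x)$ is bounded above there by a strictly negative constant, so taking $q_{21}$ small keeps the bracket negative uniformly on $(a_1,a_2)$. As in the remark following Theorem \ref{trans-prop}, this is exactly the mechanism by which the set $\{LV(x,\cdot)<0\}$ grows from a small left‑neighbourhood of $a_2$ to all of $(a_1,a_2)$; the constraint $\alpha c_2<1$ is preserved by shrinking $\alpha$ if needed. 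The third step is then identical to the end of Theorem \ref{trans-prop}: fix $\epsilon_0\in(0,a_2-a_1)$, let $M$ be Lebesgue measure, set $\Phi(A)=M\bigl(A\cap(a_2-\epsilon_0,a_2)\bigr)$ for $A\in\mathfrak{B}\bigl((0,1)\bigr)$ to obtain an irreducibility measure, note that $P_{x,i}\bigl(\tau^-_{a_2-\epsilon_0}=\infty\bigr)>0$ for $x\in(a_2-\epsilon_0,a_2)$ since the nonnegative supermartingale $V\bigl(X(\cdot)\bigr)$ takes a strictly larger value at $a_2-\epsilon_0$ than at any such $x$, and invoke Theorem 3 of Meyn and Tweedie \cite{MT93_1} to conclude transience on $(a_1,a_2)$.

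The only non‑bookkeeping point — and hence the main obstacle — is the uniformity in the second step: one must check that $1-\alpha c_i>0$, the State-$1$ inequality (which forces $q_{12}$ large) and the State-$2$ inequality (which forces $q_{21}$ small relative to the strictly positive quantity $\min_{(a_1,a_2)}\bigl|\dot Y(x,2)/(a_2-x)\bigr|$) are mutually compatible over the whole interval, not just near $a_2$; near the left endpoint $a_1$, where $\dot Y(\cdot,2)$ no longer blows up, negativity of $LV$ rests entirely on the generator term, so explicit bounds on the $\dot Y$‑ratios are needed there. However this balancing is precisely the one already carried out for $b_2>b_1$ in Corollary \ref{full-trans}, and it transfers verbatim: under $x\mapsto 1-x$ together with the relabelling $b_1\leftrightarrow b_2$ the identity $Y(x,r)=1-Y(1-x,r)$ carries the present statement onto Corollary \ref{full-trans}, with $(a_1,a_2)$ mapped to the interval $(\tilde{a}_2,\tilde{a}_3)$ of the reflected game. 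So a fully rigorous alternative is simply to invoke that reflection symmetry together with Corollary \ref{full-trans}.
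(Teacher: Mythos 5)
Your proposal is correct and follows essentially the same route as the paper, which proves this corollary only by pointing to the mirrored Lyapunov function $V(x,i)=(1-\alpha c_i)(a_2-x)^{\alpha}$ and the argument of Theorem \ref{trans-prop}/Corollary \ref{full-trans}; your sign analysis (state 1 bounded and nonnegative, state 2 blowing up to $-\infty$ at $a_2$ and bounded above by a negative constant on all of $(a_1,a_2)$) and the choice of $q_{12}$ large, $q_{21}$ small is exactly the intended mechanism. The closing observation that the reflection $x\mapsto 1-x$ with $b_1\leftrightarrow b_2$ reduces the statement to Corollary \ref{full-trans} is also consistent with the symmetry $Y\big(x_2(t),r(t)\big)=1-Y\big(x_1(t),r(t)\big)$ that the paper itself invokes.
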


\begin{re}
For the type 1 dominate case, $b_2>b_1$, Propositions \ref{trans-prop}, \ref{prop-pos-rec}, and \ref{b2<b1trans} display a counterintuitive property of the an almost sure flow to an invariant neighborhood close to $a_1$, favoring type 2, the dominated subpopulation in the majority of initial conditions.  
\end{re}

The proof in Lemma \ref{full-trans} showed how the values of $q_{21}$ and $q_{12}$ play a role in the evolution of the dynamic. If the interval $(a_2, a_2 + \epsilon)$ in Proposition \ref{trans-prop} is small, one may posit the existence of a subinterval of $(a_2+\epsilon, a_3)$ for the dynamic to be recurrent. The proposition below shows the existence of such a dynamic.

\begin{prop}\label{localrecurrent}
For the dynamic defined by Equation \eqref{smr} where $b_2 > b_1$, and given mutation rates $\mu_1$ and $\mu_2$, there exists values $q_{21}$, $q_{12}$, and $\displaystyle \hat{\epsilon}>0$ dependent on $q_{21}$ and $q_{12}$ where for the initial condition $x_0\in(a_2, a_3)$ the dynamic is recurrent in a strict subinterval of $(a_2+ \hat{\epsilon} ,a_3)$. 
\end{prop}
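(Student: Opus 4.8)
The plan is to pair a careful choice of the switching rates $q_{12},q_{21}$ with the instability of $a_3$ already available from Lemma \ref{a3trans}, and then to isolate a strict subinterval of $(a_2+\hat{\epsilon},a_3)$ whose two endpoints can be treated exactly like the unstable endpoints appearing in Theorem \ref{prop-pos-rec}. First I would revisit Proposition \ref{trans-prop} and the remark following it: the width $\epsilon$ of the transient neighborhood $(a_2,a_2+\epsilon)$ is governed by the size of the switching contribution $q_{ij}\tfrac{c_i-c_j}{1-\alpha c_i}$ in $LV(x,i)$, so by taking $c_1<c_2$ with $c_2-c_1$ small, $\alpha$ small, and $q_{12}$ so that this term only barely dominates $\lim_{x\searrow a_2}\dot{Y}(x,1)/(x-a_2)$, one forces $\epsilon$ to be as small as desired, with $q_{21}$ still free. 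Fixing such rates, I would choose $\hat{\epsilon}\in(\epsilon,a_3-a_2)$, still small, so that $a_2+\hat{\epsilon}$ lies strictly to the right of the transient neighborhood; note that on all of $(a_2+\hat{\epsilon},a_3)$ one has $\dot{Y}(x,1)>0$ (the state-$1$ flow heads toward $a_3$) and $\dot{Y}(x,2)<0$ (the state-$2$ flow heads downward, since $\hat{a}<a_2$).

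The upper endpoint is the easy half. Lemma \ref{a3trans} already shows $x=a_3$ is stochastically unstable, and the exploding Lyapunov function used there, $V(x,i)=(1-\alpha c_i)(a_3-x)^{-\alpha}$, satisfies $LV(x,i)<0$ on some $(a_3-\epsilon_2,a_3)$ for both $i$; following the logic of Lemma \ref{0trans} this yields $E_{x,i}[\tau^+_{a_3-\epsilon_2}]<\infty$ for any $x\in(a_3-\epsilon_2,a_3)$, and in particular the process cannot exit this neighborhood through $a_3$. For the lower endpoint I would try to produce a point $\ell$ slightly above $a_2+\hat{\epsilon}$ and a state-dependent Lyapunov function $V(x,i)=(1+\alpha c_i)(x-\ell)^{-\alpha}$ (or $-c_i\log(x-\ell)$) with $LV(x,i)<0$ on a right-neighborhood $(\ell,\ell+\epsilon_1)$, giving $E_{x,i}[\tau^+_{\ell+\epsilon_1}]<\infty$ there too. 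With both endpoint estimates in place, the process reaches the strict subinterval $(\ell+\epsilon_1,a_3-\epsilon_2)\subset(a_2+\hat{\epsilon},a_3)$ in finite expected time, and the strong Markov property upgrades this to recurrence in that subinterval, exactly as in the concluding paragraph of Theorem \ref{prop-pos-rec}.

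The hard part will be this lower-endpoint step, i.e.\ showing that the process started in $(a_2+\hat{\epsilon},a_3)$ does not slip back down through $a_2+\hat{\epsilon}$ into the transient zone $(a_2,a_2+\epsilon)$. Because $\dot{Y}(\cdot,2)<0$ on the whole interval, the state-$2$ drift term $-\dot{Y}(x,2)/(x-\ell)$ blows up to $+\infty$ as $x\searrow\ell$ while the switching contribution $q_{ij}\tfrac{c_i-c_j}{1+\alpha c_i}$ stays bounded, so the single-state explosion argument that works at $a_3$ cannot by itself repel the process upward while it sits in state $2$. This is precisely where the freedom in $q_{21}$ is meant to be spent: taking $q_{21}$ large, so $\pi_1=\tfrac{q_{21}}{q_{12}+q_{21}}$ is close to one, the stationary-averaged drift $\pi_1\dot{Y}(x,1)+\pi_2\dot{Y}(x,2)$ is strictly positive on a right-neighborhood of $a_2+\hat{\epsilon}$, and a state-$2$ excursion beginning near $\ell$ returns to state $1$ (at rate $q_{21}$) well before the downward flow, of speed at most $\sup_{x\in(a_2,a_3)}|\dot{Y}(x,2)|$, can drag the process the distance $\hat{\epsilon}-\epsilon$ needed to reach the transient neighborhood. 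Making this quantitative --- either by building a genuine two-state (Khasminskii-type) Lyapunov function whose switching term carries the sign, or by a direct excursion/Borel--Cantelli estimate bounding the probability of ever entering $(a_2,a_2+\epsilon)$ --- is the delicate point of the argument; once the process is confined to $(a_2+\hat{\epsilon},a_3)$, recurrence in an interior subinterval follows from the instability of $a_3$ together with the confinement as described above.
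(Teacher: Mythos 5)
Your overall architecture (shrink the transient neighborhood of Proposition \ref{trans-prop} by tuning the switching rates, use Lemma \ref{a3trans} at the top, then control a lower endpoint and conclude as in Theorem \ref{prop-pos-rec}) is the paper's architecture, but the proof is incomplete at exactly the step you flag as ``the delicate point.'' Your diagnosis there is right --- an exploding Lyapunov function $(x-\ell)^{-\alpha}$ centered at an artificial point $\ell$ cannot work, because $\ell$ is not a fixed point of either flow and $-\dot{Y}(x,2)/(x-\ell)\to+\infty$ forces $LV(x,2)>0$ near $\ell$ --- but the two replacements you offer (a stationary-average drift/excursion estimate, or an unspecified two-state Lyapunov function) are left as heuristics. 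Since the statement is existential in $q_{12},q_{21}$, no excursion or Borel--Cantelli analysis is needed; the paper resolves the lower endpoint with the first of your two options, made concrete as follows.

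Take the single \emph{bounded} function $V(x,i)=(1-\alpha c_i)(a_3-x)^{\alpha}$ on all of $(a_2+\hat{\epsilon},a_3-\tilde{\epsilon})$, for which
$LV(x,i)=\bigl\{-\dot{Y}(x,i)/(a_3-x)+q_{ij}(c_i-c_j)/(1-\alpha c_i)\bigr\}\alpha V(x,i)$.
Away from the endpoints both drift ratios are bounded: $k_1<-\dot{Y}(x,1)/(a_3-x)<0$ and $0<-\dot{Y}(x,2)/(a_3-x)<k_2$. Choosing $c_2<c_1$ makes the switching contribution strictly negative in state $2$, and taking $q_{21}$ large enough that $q_{21}(c_2-c_1)/(1-\alpha c_2)<-k_2$, while keeping $q_{12}(c_1-c_2)/(1-\alpha c_1)<|k_1|$, yields $LV(x,i)<0$ for both $i$ on the whole subinterval --- a uniform averaged drift toward $a_3$, hence $E_{x,i}[\tau^{-}_{a_3-\tilde{\epsilon}}]<\infty$ by the Dynkin argument of Lemma \ref{0trans}. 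This is the quantitative content your excursion heuristic was reaching for: the boundedness of the adverse state-$2$ drift on the truncated interval is what lets the switching term carry the sign. One must also check, as the paper does, that the same choice of $q_{12},q_{21}$ keeps the transient radius $\bar{\epsilon}$ of Proposition \ref{trans-prop} strictly below $a_3-a_2$ so that $\hat{\epsilon}$ with $a_3-(a_2+\hat{\epsilon})>\tilde{\epsilon}$ exists; your proposal asserts this compatibility but, like the paper, should verify that tuning $q_{21}$ upward does not re-inflate $\bar{\epsilon}$. With the hitting estimate in hand, Lemma \ref{a3trans} and the concluding argument of Theorem \ref{prop-pos-rec} give recurrence as you describe.
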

\begin{proof}
Lemma \ref{a3trans} states the point $x=a_3$ is stochastically unstable. We will use Lemma \ref{a3trans} to prove the proposition by showing the existence of a subinterval in $( a_2 , a_3 )$ where if the initial condition is in this subinterval, the dynamic will almost surely evolve to a neighborhood of $a_3$.  For this subinterval, take an arbitrary $\tilde{\epsilon}>0$ to give an upper bound with the interval $(a_2,a_3 - \tilde{\epsilon} )$, where a value will be later calibrated. For a lower bound of this interval, we will derive values of $q_{21}$ and $q_{12}$ where one may find an $\hat{\epsilon}>0$ where for $\bar{\epsilon}$ the maximum value in Proposition \ref{trans-prop},  $\hat{\epsilon}>\bar{\epsilon}$, and if $x\in( a_2+\hat{\epsilon} , a_3 -\tilde{\epsilon} )$ and $i\in\{1,2\}$, then $E_{x,i}\big[ \tau_{ a_3-\tilde{\epsilon} }^{-}]<\infty$. 

Define the Lyapunov function $V(x,i)=(1-\alpha c_i)\big(a_3-x)^{\alpha}$. One may then see that for $i\in\{1,2\}$,
$$
LV(x,i)=\bigg\{  \frac{ - \dot{Y}(x,i) }{ a_3 - x  } + q_{ij} \frac{ c_i -c_j }{ 1 -\alpha c_i}\bigg\} \Big( \alpha V(x,i) \Big).
$$

For an arbitrary $\epsilon>0$, notice for $x\in(a_2 + \epsilon, a_3- \tilde{\epsilon} )$, there exists values $k_1<0$ and $k_2>0$ such that  $k_1<\displaystyle \frac{- \dot{Y}(x,1) }{  a_3 - x} < 0$ and $0 < \displaystyle \frac{-  \dot{Y}(x,2)  }{  a_3-x  } < k_2$. Given the boundedness and continuity of the two functions on this interval, one may find values of $q_{21}$, $q_{12}$, $\alpha$, $c_1$, $c_2$, and $\tilde{\epsilon}$ where $LV(x,i)<0$ for $x\in(a_2+\hat{\epsilon}, a_3 -\tilde{\epsilon})$, $i=1,2$, and $a_3 - (a_2+\hat{\epsilon}) > \tilde{\epsilon}$. 

This logic only holds if it is not the case that $\bar{\epsilon} \geq a_3-a_2$. Since the mutation rates are fixed, the proof in Proposition \ref{trans-prop} displays that $\bar{\epsilon}$ is contingent on the values of $q_{21}$ and $q_{12}$ and varies in size. Therefore, one may calibrate the values of $q_{21}$ and $q_{12}$ to derive an  $\bar{\epsilon} >0$ such that $a_3-a_2 > \bar{\epsilon} $.

Following the proof in Proposition \ref{prop-pos-rec} finishes the statement.
\end{proof}

When $b_2 < b_1$, a similar logic gives the existence of a subinterval in $(a_1, a_2-\epsilon)$ for the dynamic to hold the recurrent property, which natural yields the corollary below.

\begin{cor}\label{b2>b1localrecurrent}
For the dynamic defined by Equation \eqref{smr} where $b_2 < b_1$, and given mutation rates $\mu_1$ and $\mu_2$, there exists values $q_{21}$, $q_{12}$, and $\displaystyle \hat{\epsilon}>0$ dependent on $q_{21}$ and $q_{12}$ where for the initial condition $x_0\in(a_1, a_2)$ the dynamic is recurrent in a strict subinterval of $(a_1, a_2- \hat{\epsilon})$. 
\end{cor}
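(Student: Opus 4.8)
The plan is to obtain the statement as the mirror image of Proposition \ref{localrecurrent} under the reflection $x\mapsto 1-x$, which interchanges the roles of $b_1$ and $b_2$. If $x(t)$ solves Equation \eqref{smr} with payoffs $(b_1,b_2)$ satisfying $b_2<b_1$, then $1-x(t)$ solves Equation \eqref{smr} with payoffs $(b_2,b_1)$ and the \emph{same} chain $r(t)$ — this is exactly the identity $Y\bigl(x_2(t),r(t)\bigr)=1-Y\bigl(x_1(t),r(t)\bigr)$ already recorded in Section \ref{Long-Run Behavior}, and in particular the switching rates $q_{12},q_{21}$ and the mutation values $\mu_1,\mu_2$ are untouched. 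This reflection sends the fixed points $a_1<a_2<a_3$ to $1-a_3<1-a_2<1-a_1$, and the reflected system has $\tilde b_2:=b_1>b_2=:\tilde b_1$, so Proposition \ref{localrecurrent} applies to it verbatim.

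Concretely, I would proceed as follows. First, verify that $x\mapsto 1-x$ is a homeomorphism of $(0,1)$ conjugating the generator $L$ of the $(b_1,b_2)$ process with that of the $(b_2,b_1)$ process; the switching part $q_{ij}\bigl(V(x,j)-V(x,i)\bigr)$ is preserved because $r(t)$ is unchanged, and the drift part transforms correctly by the cited identity, so regularity, hitting times, and the recurrence relation with respect to a set $D\times J$ all pull back along $D\leftrightarrow 1-D$. Second, apply Proposition \ref{localrecurrent} to the reflected system: there exist $q_{21},q_{12}$ and $\hat\epsilon>0$ so that for $\tilde x_0\in(1-a_2,\,1-a_1)$ the reflected dynamic is recurrent in a strict subinterval of $\bigl((1-a_2)+\hat\epsilon,\,1-a_1\bigr)$. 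Third, translate back: $\tilde x_0=1-x_0\in(1-a_2,1-a_1)$ is the same as $x_0\in(a_1,a_2)$, and a strict subinterval of $\bigl((1-a_2)+\hat\epsilon,\,1-a_1\bigr)$ reflects to a strict subinterval of $\bigl(a_1,\,a_2-\hat\epsilon\bigr)$, which is the claim.

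Alternatively, and in the style of the neighboring corollaries, one can repeat the proof of Proposition \ref{localrecurrent} directly, replacing Lemma \ref{a3trans} by Lemma \ref{a1trans}, Proposition \ref{trans-prop} by Corollary \ref{b2>b1trans-prop}, and the Lyapunov function $(1-\alpha c_i)(a_3-x)^\alpha$ by $V(x,i)=(1-\alpha c_i)(x-a_1)^\alpha$, for which
\[
LV(x,i)=\left\{\frac{\dot Y(x,i)}{x-a_1}+q_{ij}\frac{c_i-c_j}{1-\alpha c_i}\right\}\bigl(\alpha V(x,i)\bigr).
\]
On $(a_1,a_2)$ one has $\dot Y(x,1)\le 0$ (the state-$1$ flow is toward the stable point $a_1$) while $\dot Y(x,2)\ge 0$ (the state-$2$ flow is toward $\hat a\in(a_2,a_3)$); hence $\dot Y(x,1)/(x-a_1)$ is bounded and nonpositive, whereas $\dot Y(x,2)/(x-a_1)$ is positive and blows up as $x\searrow a_1$. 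Excising a sliver $(a_1,a_1+\tilde\epsilon)$ restores boundedness of the latter, and one calibrates $q_{12},q_{21},\alpha,c_1,c_2$ so that $LV<0$ for $i=1,2$ on $(a_1+\tilde\epsilon,\,a_2-\hat\epsilon)$; the finite-expected-hitting-time argument of Lemma \ref{0trans}, together with Lemma \ref{a1trans} and the strong Markov property, then yields recurrence in a strict subinterval exactly as in Theorem \ref{prop-pos-rec}.

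The main obstacle is the calibration tension already present in Proposition \ref{localrecurrent}: the recurrence window must avoid the transient neighborhood $(a_2-\bar\epsilon,a_2)$ supplied by Corollary \ref{b2>b1trans-prop}, forcing $\hat\epsilon>\bar\epsilon$, while $\bar\epsilon$ itself depends on $q_{12},q_{21}$, so one must check that with $\mu_1,\mu_2$ fixed the rates can be chosen to make $\bar\epsilon<a_2-a_1$ and simultaneously make $(a_1+\tilde\epsilon,\,a_2-\hat\epsilon)$ a nonempty interval on which $LV<0$ for both states (this needs $q_{21}$ large enough relative to $q_{12}$ to overcome the bounded positive state-$2$ drift, and $q_{12}\,(c_1-c_2)/(1-\alpha c_1)$ small enough not to swamp the state-$1$ drift away from $a_2$). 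Via the reflection route this obstacle is discharged automatically, since it was settled inside the proof of Proposition \ref{localrecurrent}; there the only care required is the bookkeeping of which subinterval maps to which under $x\mapsto 1-x$.
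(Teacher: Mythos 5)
Your proposal is correct and matches the paper, which proves this corollary only by remarking that ``a similar logic'' to Proposition \ref{localrecurrent} applies; your second route (the mirrored Lyapunov function $(1-\alpha c_i)(x-a_1)^\alpha$ with the roles of Lemma \ref{a3trans} and Proposition \ref{trans-prop} played by Lemma \ref{a1trans} and Corollary \ref{b2>b1trans-prop}) is exactly that intended argument, with the correct sign analysis of $\dot Y(x,i)/(x-a_1)$ on $(a_1,a_2)$. Your first route via the reflection $x\mapsto 1-x$ is a clean formalization of the same symmetry the paper itself invokes through the identity $Y\bigl(x_2(t),r(t)\bigr)=1-Y\bigl(x_1(t),r(t)\bigr)$, so no genuinely new machinery is introduced.
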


Considering the case when $b_2 > b_1$, the combinations of Propositions \ref{prop-pos-rec}, \ref{trans-prop}, and \ref{localrecurrent} display the long-run behavior of the dynamic to be contingent on the initial condition, showing that values above a certain point will almost surely evolve in a strict subinterval of $(a_2,a_3)$, and values below this point will almost surely evolve in a subinterval of $(a_1,\hat{a})$. Although the local long-run behavior yields invariant subintervals, giving a strong recurrent property, the global long-run behavior of the dynamic displays values in the state space which are almost-surely unattainable, suggesting a weaker recurrent property. Corollaries \ref{cor-pos-rec}, \ref{b2>b1trans-prop}, and \ref{b2>b1localrecurrent} give a similar analysis when $b_2 < b_1$.

Corollary \ref{full-trans} shows the existence of conditions for the dynamic to be transient in the interval $(a_2,a_3)$, coupled with Propositions \ref{prop-pos-rec} and \ref{trans-prop}, displays a long-run behavior not contingent on the initial condition, suggesting a strong recurrence property. This intuition is formally shown below.

\begin{thm}\label{pos_rec}
For the dynamic defined by Equation \eqref{smr} where $b_2 > b_1$ and the properties in Corollary \ref{full-trans} hold, the process is positive recurrent. 
\end{thm}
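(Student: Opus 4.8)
The plan is to assemble the local results from the preceding theorems into a single positive-recurrence statement by exhibiting one fixed compact subinterval $K \subsetneq (a_1,a_3)$ that the process hits in finite expected time from every initial condition $(x_0,i)$. Under the hypothesis of Corollary \ref{full-trans}, the transition rates $q_{12},q_{21}$ are calibrated so that the dynamic is transient in the entire interval $(a_2,a_3)$; combined with the invariance of $(a_1,\hat{a})$ from Theorem \ref{prop-pos-rec}, this forces \emph{every} trajectory starting in $(a_1,a_3)$ — regardless of whether $x_0$ lies in $(a_1,\hat a)$, at $\hat a$, in $(\hat a, a_2)$, or in $(a_2,a_3)$ — to be driven into the invariant subinterval near $a_1$. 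So the overall strategy is: (i) reduce to the initial condition $x_0 \in (a_1,a_3)$ using Lemmas \ref{0trans} and \ref{1trans} together with the strong Markov property; (ii) inside $(a_1,a_3)$, use transience in $(a_2,a_3)$ (Corollary \ref{full-trans}) and transience near $a_2$ from the left via the $b_2>b_1$ analogue of Corollary \ref{b2>b1trans-prop} to conclude the process leaves any neighborhood of $a_2$ and cannot return, hence eventually enters $(a_1,\hat a)$; (iii) invoke Theorem \ref{prop-pos-rec} to get recurrence in a compact subinterval $[a_1+\epsilon_1, \hat a - \epsilon_2] \subsetneq (a_1,\hat a)$.

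For the quantitative part I would run the same stochastic-Lyapunov computation used throughout: with $V(x,i)=-c_i\log(x-a_1)$ on the left and $V(x,i)=-c_i\log(\hat a - x)$ on the right, choosing $c_1>c_2>0$ with $c_1-c_2$ small, the generator satisfies $LV(x,i) < -\kappa$ on the relevant subintervals by the sign properties of $\dot Y(x,1)$ and $\dot Y(x,2)$ established before Lemma \ref{0trans}; Dynkin's formula plus monotone convergence then gives a finite expected hitting time of $K$. The one genuinely new ingredient is showing the process actually \emph{reaches} $(a_1,\hat a)$ from a starting point in $(\hat a, a_3)$: here I would note that on $(\hat a, a_2)$ one has $\dot Y(x,1) \le 0$ and $\dot Y(x,2)$ bounded, so a logarithmic Lyapunov function centered at $a_1$ again yields $LV < -\kappa$ once the transition bias toward state $1$ is accounted for, and on $(a_2,a_3)$ transience (Corollary \ref{full-trans}) means the trajectory is expelled downward through $a_2$ in finite expected time and, by the transience there, does not climb back. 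Chaining these finitely many finite-expectation hitting times via the strong Markov property gives $E_{x_0,i}[\tau_K] < \infty$ for all $(x_0,i) \in K^c \times \{1,2\}$, which (since $K$ is a strict nonempty compact subinterval of $(0,1)$, and regularity is already known from the Remark after Lemma \ref{a1trans}) is exactly positive recurrence with respect to $K \times \{1,2\}$ in the sense of the definitions in Section 1.

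The main obstacle I anticipate is making precise the claim that transience in $(a_2,a_3)$ (a statement about $P_{x,i}(\tau^+_{a_3-\tilde\epsilon}=\infty)>0$ and non-return) genuinely implies the trajectory is pushed \emph{into} $(a_1,\hat a)$ with probability one in finite expected time, rather than merely wandering in $(\hat a, a_3)$ forever: one must rule out the process lingering indefinitely in a neighborhood of $\hat a$ or $a_2$. This is handled by Theorem \ref{prop-pos-rec} (instability of $\hat a$ from below) and Proposition \ref{trans-prop} (instability near $a_2$), but stitching the "from the right" and "from the left" escape estimates around $\hat a$ and $a_2$ into a single unambiguous downward drift — and verifying the Lyapunov constants $c_1,c_2,\alpha$ can be chosen \emph{simultaneously} compatible with all the earlier lemmas under the fixed $q_{12},q_{21}$ of Corollary \ref{full-trans} — is the delicate bookkeeping step. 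Once that compatibility is secured, the positive-recurrence conclusion follows by the now-standard Dynkin argument.
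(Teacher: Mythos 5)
Your proposal is correct and takes essentially the same route as the paper: the published proof likewise reduces to $(a_1,a_3)$ via Lemmas \ref{0trans} and \ref{1trans}, pushes trajectories down through $(a_2,a_3)$ using Corollary \ref{full-trans} and through $(\hat a, a_2)$ using Proposition \ref{b2<b1trans}, and then chains the finite expected hitting times by the strong Markov property to land in the recurrent subinterval of $(a_1,\hat a)$ given by Theorem \ref{prop-pos-rec}. The "delicate bookkeeping" you flag (upgrading transience in $(a_2,a_3)$ to an actual downward passage) is present in, but not elaborated by, the paper's own two-sentence argument.
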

\begin{proof}
To show the property, two points at opposite ends of the unit interval will be taken then displayed to eventually evolve in the same neighborhood. Throughout the proof, $i \in \{1,2 \}$. 

For initial condition $x>a_3$, Lemma \ref{1trans}, Corollary \ref{full-trans}, and Lemma \ref{b2<b1trans} yields the existence of an $\epsilon_1>0$ where $E_{x,i}[ \tau_{a_1 + \epsilon_1 }^+]<\infty$. For initial condition $x < a_1$, Lemma \ref{0trans} and Proposition \ref{prop-pos-rec} yield the existence of an $\epsilon_2 >0$ where $E_{x,i}[ \tau_{a_1 + \epsilon_2 }^-]<\infty$. Proposition \ref{prop-pos-rec} and the strong Markov property tells us that the process is positive recurrent in a subinterval of $(a_1,\hat{a})$.
\end{proof}

\begin{cor}\label{b2>b1pos_rec}
For the dynamic defined by Equation \eqref{smr} where $b_2 < b_1$ and holds the properties in Corollary \ref{b2>bfull-trans}, then process is positive recurrent. 
\end{cor}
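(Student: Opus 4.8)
The plan is to obtain this as the reflected counterpart of Theorem~\ref{pos_rec}. The homeomorphism $x \mapsto 1-x$ together with the interchange $b_1 \leftrightarrow b_2$ carries the dynamic \eqref{smr} to itself — this is the relation $Y\big(x_2(t),r(t)\big) = 1 - Y\big(x_1(t),r(t)\big)$ noted in Section~\ref{Long-Run Behavior} — swapping $a_1 \leftrightarrow a_3$, fixing $a_2$ and $\hat a$, carrying the transient interval $(a_2,a_3)$ of the $b_2 > b_1$ case to the interval $(a_1,a_2)$ of the $b_2 < b_1$ case, and the recurrent interval $(a_1,\hat a)$ to $(\hat a, a_3)$. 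Under this correspondence the hypothesis ``the properties in Corollary~\ref{b2>bfull-trans} hold'' is exactly the reflection of ``the properties in Corollary~\ref{full-trans} hold,'' and since positive recurrence (together with the requirement that the recurrent set be a strict nonempty subinterval of the unit interval) is preserved by this homeomorphism, the claim is equivalent to Theorem~\ref{pos_rec}. I would present the proof in essentially this one-line form.

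For completeness one can run the argument directly, mirroring the proof of Theorem~\ref{pos_rec}: take two initial conditions at opposite ends, $x_0 > a_3$ and $x_0 < a_1$, and show both reach a common recurrent subinterval of $(\hat a, a_3)$ in finite expected time, after which positive recurrence follows from Corollary~\ref{cor-pos-rec} and the strong Markov property. For $x_0 > a_3$: Lemma~\ref{1trans} gives finite expected time to reach a left neighborhood of $a_3$, which already lies in $(\hat a, a_3)$, so Corollary~\ref{cor-pos-rec} applies. For $x_0 < a_1$: Lemma~\ref{0trans} gives finite expected time to reach $a_1$; since $a_1$ is stochastically unstable from above (Lemma~\ref{a1trans}) and the dynamic is transient on $(a_1,a_2)$ (Corollary~\ref{b2>bfull-trans}), the process leaves $(a_1,a_2)$ through $a_2$ in finite expected time; then Corollary~\ref{b2>b1trans} gives finite expected time to reach $\hat a$ from within $(a_2,\hat a)$, after which Corollary~\ref{cor-pos-rec} again applies. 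Composing these finitely many finite expected hitting times via the strong Markov property completes the direct argument.

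The only real work is the bookkeeping at the interior thresholds $a_1$, $a_2$, $\hat a$: one must check that the ``coming from the left / coming from the right'' hitting times chain together (the process exits each sub-regime on the side feeding the next step's hypotheses), that the transience statement of Corollary~\ref{b2>bfull-trans} really delivers a \emph{finite-expectation} exit time through $a_2$ rather than merely an almost-sure one, and that the recurrent subinterval furnished by Corollary~\ref{cor-pos-rec} can be fixed once and for all for the relevant initial conditions, so that the concluding strong Markov step upgrades recurrence to positive recurrence. Invoking the reflection symmetry sidesteps all of this by reducing directly to Theorem~\ref{pos_rec}, which is the route I would take.
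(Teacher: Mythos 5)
Your proposal is correct and matches the paper's intent: the paper leaves this corollary unproved, relying on the stated convention that results for $b_2<b_1$ follow from the $b_2>b_1$ case via the reflection $Y\big(x_2(t),r(t)\big)=1-Y\big(x_1(t),r(t)\big)$, which is exactly your first route, and your mirrored direct argument (chaining Lemma \ref{0trans}, Corollary \ref{b2>bfull-trans}, Corollary \ref{b2>b1trans}, and Corollary \ref{cor-pos-rec} through $a_1$, $a_2$, $\hat a$) is the faithful analogue of the proof of Theorem \ref{pos_rec}.
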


\subsection{The Case of Equality}\label{The Case of Equality}

Recall from Section \ref{MR} that when $b_1 = b_2$ the equality $\hat{a}=a_2$ holds. As a consequence of this equality, the dynamic will slowly evolve to and quickly evolve away from this point. This property was used to show that the point $a_3$ is unstable in Lemma \ref{a3trans}, and the point $a_1$ is unstable in Proposition \ref{prop-pos-rec}. Moreover, since the characteristics of the dynamic with the equality $b_1 = b_2$ at the ends of the unit interval are similar to the properties shown of the dynamic when $b_1 \neq b_2$.

The equality $\hat{a}=a_2$ yields that the dynamic of the process to be completely dependent on the initial condition, which in turn dictates the subinterval where the process holds the recurrent property. 

\begin{thm}\label{propb1=b2}
For the dynamic defined by Equation \eqref{smr} where $b_1 = b_2$, then contingent on the initial condition $\displaystyle x_0\in[0,\hat{a} ) \cup(\hat{a} ,1]$ and $i \in \{1,2\}$, the process will eventually evolve in the set $(a_1, \hat{a} )$, or $(\hat{a} , a_3)$ and will be recurrent in either subinterval. 
\end{thm}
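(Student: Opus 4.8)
The plan is to exploit the symmetry that $b_1=b_2$ confers on the dynamic, reduce to one side of $\hat a$, and recover the instability of the two endpoints by the same supermartingale/Dynkin machinery used in Lemmas \ref{0trans} and \ref{a1trans} and Theorem \ref{prop-pos-rec}. Since $b_1=b_2$ gives $\dot Y(1-x,i)=-\dot Y(x,i)$, one has $a_1=1-a_3$ and $\hat a=a_2=\tfrac{1}{2}$, and $1-Y(t)$ has the law of the process started from $(1-x_0,i)$; hence the cases $x_0<\hat a$ and $x_0>\hat a$ are mirror images and it suffices to take $x_0\in[0,\hat a)$ and show the process eventually remains in, and is recurrent in a compact subinterval of, $(a_1,\hat a)$. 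First I would dispose of $x_0\in(0,a_1)$ via Lemma \ref{0trans}, which gives $E_{x_0,i}\big[\tau^-_{a_1}\big]<\infty$; since $\dot Y(a_1,2)>0$ (the state-$2$ flow at $a_1$ points toward $\hat a$) while $a_1$ is a fixed point of the state-$1$ flow, the process a.s.\ enters $(a_1,\hat a)$ in finite time, so by the strong Markov property we may assume $x_0\in(a_1,\hat a)$. The boundary values $x_0\in\{0,a_1\}$ are absorbed into the same reduction, using $\dot Y(0,i)=\mu_i>0$ and $\dot Y(a_1,2)>0$.

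Next I would prove invariance: $P_{x_0,i}\big(Y(t)\in(a_1,\hat a)\big)=1$. Because $b_1=b_2$ makes $\hat a=a_2=\tfrac{1}{2}$ a common zero of the two polynomial (hence locally Lipschitz) vector fields $\dot Y(\cdot,1)$ and $\dot Y(\cdot,2)$, a trajectory started strictly below $\hat a$ cannot reach $\hat a$ in finite time under either driving field, and a switch of $r$ does not move the $x$-coordinate; similarly $a_1$ is never reached from the right. So the process stays in $(a_1,\hat a)$ for all time, which --- regularity having been established in the Remark following Lemma \ref{a1trans} --- is half of the recurrence claim.

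For recurrence in a compact subinterval I would show $a_1$ and $\hat a$ are stochastically unstable from inside $(a_1,\hat a)$. For $a_1$ this is immediate: Lemma \ref{a1trans} already furnishes $\epsilon_1>0$ and a Lyapunov function exploding at $a_1$ with $LV(x,i)<0$ on $(a_1,a_1+\epsilon_1)$, so by Dynkin's formula and monotone convergence (exactly as in Lemma \ref{0trans}) $E_{x,i}\big[\tau^-_{a_1+\epsilon_1}\big]<\infty$ on $(a_1,a_1+\epsilon_1)$. For $\hat a$ I would mimic the $\hat a$-step of Theorem \ref{prop-pos-rec}: with $V(x,i)=(1-\alpha c_i)(\hat a-x)^{-\alpha}$, $\alpha>0$, $\alpha c_i<1$, one has $LV(x,i)=\big\{\dot Y(x,i)/(\hat a-x)+q_{ij}(c_i-c_j)/(1-\alpha c_i)\big\}\,\alpha V(x,i)$, and as $x\uparrow\hat a$, $\dot Y(x,1)/(\hat a-x)\to-\dot Y'(\hat a,1)<0$ (since $a_2$ is unstable for state $1$) while $\dot Y(x,2)/(\hat a-x)\to-\dot Y'(\hat a,2)>0$ (since $\hat a$ is stable for state $2$); choosing $\alpha,c_1,c_2$ suitably forces $LV(x,i)<0$ for $i=1,2$ on a left-neighborhood $(\hat a-\epsilon_2,\hat a)$, whence $E_{x,i}\big[\tau^+_{\hat a-\epsilon_2}\big]<\infty$ there. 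With invariance and the strong Markov property, the process a.s.\ returns to the compact subinterval $[a_1+\epsilon_1,\hat a-\epsilon_2]$ of $(a_1,\hat a)$ infinitely often, i.e.\ is recurrent in $(a_1,\hat a)$; the case $x_0>\hat a$ follows from the symmetry above (equivalently, rerun the argument with $\hat a-x$ replaced by $x-\hat a$, $a_1$ by $a_3$, and Lemma \ref{a3trans} in place of Lemma \ref{a1trans}).

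The main obstacle is the $\hat a$-step. In Theorem \ref{prop-pos-rec} (the case $b_1\neq b_2$) the point $\hat a\neq a_2$ is not a fixed point of the state-$1$ flow, so $\dot Y(x,1)/(\hat a-x)$ blows up and dominates the bounded term $q_{ij}(c_i-c_j)/(1-\alpha c_i)$, making $LV(x,1)<0$ automatic; when $b_1=b_2$ the common fixed point $\hat a=a_2=\tfrac{1}{2}$ makes \emph{both} ratios $\dot Y(x,i)/(\hat a-x)$ bounded near $\hat a$, so the sign of $LV$ must now be wrung out of a balance between the switching rates $q_{12},q_{21}$ and the linearizations $\dot Y'(\hat a,1)>0>\dot Y'(\hat a,2)$. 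One should expect the required constants $c_1,c_2$ to exist --- i.e.\ $\hat a$ to be repelling from below rather than absorbing --- only when state $1$ is visited often enough, under a condition of the shape $q_{21}\,\dot Y'(\hat a,1)+q_{12}\,\dot Y'(\hat a,2)>0$; checking that such constants can be chosen (or, failing that, qualifying the statement accordingly) is the delicate point, the remainder being a transcription of arguments already in the paper.
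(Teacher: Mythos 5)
Your reduction by symmetry, the treatment of $x_0\in(0,a_1)$ via Lemma \ref{0trans}, and the invariance of $(a_1,\hat a)$ because $\hat a=a_2$ is a common zero of both locally Lipschitz fields are all sound, and they go well beyond the paper's own proof, which consists of a two-line citation: Lemmas \ref{0trans} and \ref{1trans} to reach the relevant side of $\hat a$, and then Theorem \ref{prop-pos-rec} to ``finish.'' The obstacle you isolate at the $\hat a$-step is not a defect of your write-up but a genuine gap in that citation. The $\hat a$-argument inside Theorem \ref{prop-pos-rec} rests on $\dot Y(x,1)/(\hat a-x)$ being \emph{unbounded} near $\hat a$, which holds precisely because $\hat a\neq a_2$ there, so that the state-$1$ field does not vanish at $\hat a$ and its divergent (negative) ratio swamps the bounded switching term $q_{ij}(c_i-c_j)/(1-\alpha c_i)$. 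When $b_1=b_2$ and $\hat a=a_2=\tfrac12$, both ratios are bounded, converging to $-\dot Y'(\hat a,1)<0$ and $-\dot Y'(\hat a,2)>0$ as you say, and the requirement $LV(x,i)<0$ for both $i$ forces $c_1-c_2$ into the window
$$
\frac{\lvert \dot Y'(\hat a,2)\rvert\,(1-\alpha c_2)}{q_{21}}\;<\;c_1-c_2\;<\;\frac{\dot Y'(\hat a,1)\,(1-\alpha c_1)}{q_{12}},
$$
which is nonempty (for small $\alpha$) exactly when $q_{21}\dot Y'(\hat a,1)+q_{12}\dot Y'(\hat a,2)>0$, i.e.\ when the stationary-measure-weighted exponent $\pi_1\dot Y'(\hat a,1)+\pi_2\dot Y'(\hat a,2)$ is positive. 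This is the standard sharp criterion for instability of a common equilibrium of a Markov-switched flow: when the averaged exponent is negative, $\hat a$ is stochastically stable and trajectories started near it converge to $\hat a$ with positive probability, so recurrence to a compact subinterval bounded away from $\hat a$ fails. The theorem as stated, with no hypothesis on $q_{12},q_{21}$, therefore needs the qualification you propose, and your condition is the correct one.

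The only minor points to tighten are cosmetic: the claim $E_{x,i}[\tau^-_{a_1+\epsilon_1}]<\infty$ does not follow from $LV<0$ alone for a Lyapunov function that merely explodes at $a_1$ (that gives instability of $a_1$, i.e.\ the process a.s.\ does not converge to $a_1$, via the supermartingale argument of Lemma \ref{a3trans}); to get a finite expected hitting time of $a_1+\epsilon_1$ you should use the bounded Lyapunov function of Lemma \ref{0trans}, $V(x,i)=-c_i\log(x-a_1)$, with the uniform bound $LV<-\kappa$ and Dynkin plus monotone convergence, exactly as the paper does elsewhere. With that substitution, and the averaged-exponent hypothesis made explicit, your argument is complete and repairs the statement.
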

\begin{proof}
Lemma \ref{0trans} and Lemma \ref{1trans} yields that the process will evolve in either subinterval in finite time. Proposition \ref{prop-pos-rec} finishes the proof.  
\end{proof}

\section{Conclusions}
The dynamic consisted of a previously characterized process evolving on the two-dimensional simplex and a telegraph noise that randomly switches the value of the mutation parameter above and below the bifurcation point. This minor and natural random perturbation created a complex dynamic. The analysis focused on a general view, and for the case of $b_2 \neq b_1$, displayed the influence the parameters in the Markov chain generator have on the evolution. Given the restriction of the values for the mutation term, focusing on $q_{21}$ and $q_{21}$ and the potential behavior the generator values may dictate was natural. 

The analysis of the case of $b_2 \neq b_1$ yielded two long-run behaviors, the first behavior an intuitive positive recurrence property where the dynamic will eventually evolve in a subinterval, and the second surprising behavior, initial conditions dictating which of the two subintervals the dynamic will eventually evolve in. There may be a third behavior, given the initial condition is in the subinterval with the potential to be transient, the dynamic has a positive probability of evolving in either subinterval. Simulations displayed this may be a characteristic, however, these observation may come from the approximation or not running the simulations for a sufficient amount of time. 

\begin{figure}[H]
\centering
\includegraphics[scale=.40]{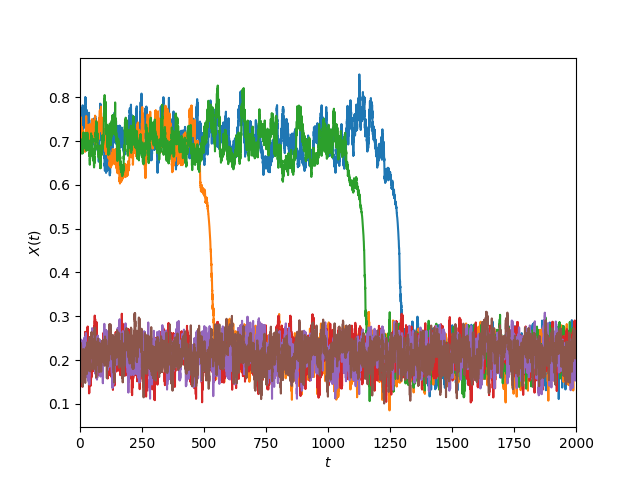}
\includegraphics[scale=.40]{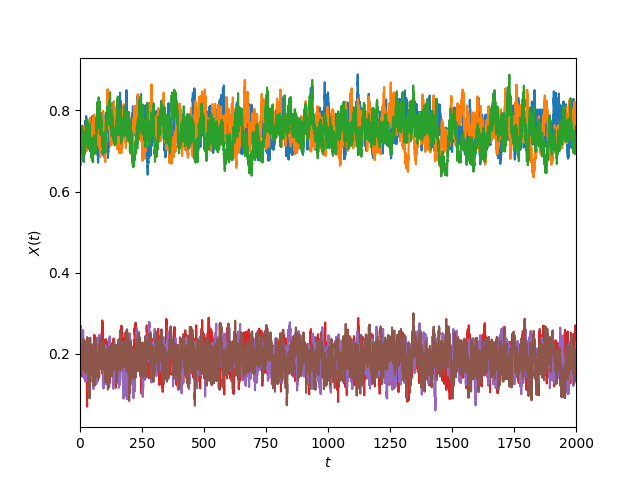}
\caption{The simulations show the evolution of the dynamic with payoffs $b_1 = .2$ and $b_2 = .3$, switched mutation rates $u_1 = .01$ $u_2 = .26$, and two different initial conditions $x_0=.7$ and $x_0=.2$. The left hand side simulation was taken with the Markov generator values $q_{21} = 10.0$ and $q_{12} = 10.0$, and the right hand simulation was taken with Markov generator $q_{21} = 10.0$ and $q_{12} = 12.0$, displaying each of the proven long-run characteristics. }
\end{figure}

An important result not in the manuscript is a closed form solution for this dynamic. In particular, characterizing the invariant measure consisting of explicit bounds for the interval(s) (displaying the mass of this measure). To the knowledge of the author, outside of numerical methods, giving an analytical solution of an invariant measure in general is quite a difficult task. 

\section{Acknowledgements}
The author would like to thank Bahman Gharesifard for direction into this area, George Yin for his knowledge in stochastic stability, and to also thank Troy Day for support in writing this manuscript.

\bibliography{ref_mr}                                                                                                                                               

\begin{thebibliography}{10}

\bibitem{NKN01}
M.A. Nowak, N.L. Komarova, and P.~Niyogi.
\newblock Evolution of universal grammar.
\newblock {\em Science}, 291:114--118, 2001.

\bibitem{hofbauer1998evolutionary}
J.~Hofbauer and K.~Sigmund.
\newblock {\em Evolutionary games and population dynamics}.
\newblock Cambridge University Press, 1998.

\bibitem{TAEH10}
H.~Tembine, E.~Altman, R.~El-Azouzi, and Y.~Hayel.
\newblock Evolutionary games in wireless networks.
\newblock {\em IEEE Transactions on Systems, Man, and Cybernetics, Part B.},
  40(3):634--646, 2010.

\bibitem{OS07}
R.~Olfati-Saber.
\newblock Evolutionary dynamics of behavior in social networks.
\newblock {\em 46th IEEE Conf. on Decision {\&} Control}, pages 4051--4056,
  2007.

\bibitem{KL10}
N.L. Komarova and S.A. Levin.
\newblock Eavesdropping and language dynamics.
\newblock {\em J. Theo. Bio.}, 264:104--118, 2010.

\bibitem{PCL13}
D.~Pais, C.H. Caicedo-N\'u\^nez, and N.~E. Leonard.
\newblock Hopf bifurcations and limit cycles in evolutionary network dynamics.
\newblock {\em SIAM J. Appl. Dyn. Syst.}, 11(4):1754--1784, 2012.

\bibitem{GGMP12}
A.~Gray, D.~Greenhalgh, X.~Mao, and J.~Pan.
\newblock The {S}{I}{S} epidemic model with {M}arkovian switching.
\newblock {\em Journal of Mathematical Analysis and Applications},
  394:496--516, 2012.

\bibitem{VL1}
A.~Vlasic.
\newblock Stochastic replicator dynamics subject to {M}arkovian switching.
\newblock {\em J. Math. Anal. Appl.}, 427:235--247, 2015.

\bibitem{internet}
T.~Eaton.
\newblock Internet activism and the egyptian uprisings: Transforming online
  dissent into the offline world.
\newblock {\em Westminster Papers in Communication and Culture}, 9(2):3--24,
  2013.

\bibitem{opening}
P.~Howard, A.~Duffy, D.~Freelon, M~Hussain, W.~Mari, and M.~Mazaid.
\newblock Opening closed regimes: what was the role of social media during the
  arab spring?
\newblock {\em Working Paper}, 2011.

\bibitem{EF07}
Echenique F. and RG.~Jr. Fryer.
\newblock A measure of segregation based on social interactions.
\newblock {\em Quarterly Journal of Economics}, 122:441-- 485, 2007.

\bibitem{EHMM2016}
L.~Ellwardt, P.~Hern\'andez, G.~Mart\'inez-C\'anovas, and M.~Mu\~noz Herrera.
\newblock Conflict and segregation in networks: An experiment on the interplay
  between individual preferences and social influence.
\newblock {\em Journal of Dynamics and Games}, 3(2):191--216, 2016.

\bibitem{opinion2013}
M.~Moussaid, J.E. Kammer, P.P. Analytis, and H.~Neth.
\newblock Social influence and the collective dynamics of opinion formation.
\newblock {\em PLoS One}, 8(11):e78433, 2013.

\bibitem{EK}
S.~N. Ethier and T.~G. Kurtz.
\newblock {\em Markov processes -- characterization and convergence}.
\newblock John Wiley \& Sons Inc., 1986.

\bibitem{mr_stoch}
M.H. Duong and T.A. Han.
\newblock On equilibrium properties of the replicatorÐmutator equation in
  deterministic and random games.
\newblock {\em Dynamic Games and Applications}, 2019.

\bibitem{YZ10}
G.~Yin and C.~Zhu.
\newblock {\em Hybrid Switching Diffusions: Properties and Applications}.
\newblock Springer-Verlag, 2010.

\bibitem{PL11}
D.~Pais and N.~E. Leonard.
\newblock Limit cycles in replicator-mutator network dynamics.
\newblock {\em 50th IEEE Conf. on Decision and Control}, pages 3922--3927,
  2011.

\bibitem{MY06}
X.~Mao and C.~Yuan.
\newblock {\em Stochastic Differential Equations with Markovian Switching}.
\newblock Imperial College Press, 2006.

\bibitem{MT93}
S.~P. Meyn and R.~L. Tweedie.
\newblock Stability of {M}arkovian {P}rocesses {I}{I}{I}: {F}oster-{L}yapunov
  criteria for continuous-time processes.
\newblock {\em Annals of Applied Probability}, 25(1):518--548, 1993.

\bibitem{MT93_1}
S.~P. Meyn and R.~L. Tweedie.
\newblock A survey of {F}oster-{L}yapunov techniques for general state space
  {M}arkov processes.
\newblock {\em Proc. Workshop Stochastic Stability and Stochastic
  Stabilization}, 1993.

\bibitem{DMT95}
D.~Down, S.~P. Meyn, and R.~L. Tweedie.
\newblock Exponential and uniform ergodicity of markov processes.
\newblock {\em Annals of Applied Probability}, 23(4):1671--1691, 1995.

\bibitem{YZ07}
G.~Yin and Ch. Zhu.
\newblock Regularity and recurrence of switching diffusions.
\newblock {\em Jrl Syst Sci \& Complexity}, 20:273--283, 2007.

\bibitem{KZY07}
R.Z. Khasminskii, C.~Zhu, and G.~Yin.
\newblock Stability of regime-switching diffusions.
\newblock {\em Stochastic Process and Their Applications}, 117:1037--1051,
  2007.

\bibitem{A91}
W.J. Anderson.
\newblock {\em Continuous-Time {M}arkov Chains}.
\newblock Springer-Verlag, Berlin-Heidelberg, 1991.

\bibitem{YY10}
C.~Yuan and G.~Yin.
\newblock Stability of hybrid stochastic delay systems whose discrete
  components have a large state space: {A} two-time approach.
\newblock {\em Journal of Mathematical Analysis and Applications},
  368:103--119, 2010.

\end{thebibliography}
\bibliographystyle{unsrt}                                                                                                                                                                              
\nocite{*}

\end{document}